% ----------------------------------------------------------------
% AMS-LaTeX Paper ************************************************
% **** -----------------------------------------------------------
\documentclass[12pt,reqno]{amsart}
\usepackage{cases}
\usepackage{amscd}
\usepackage{amsfonts}
\usepackage{amssymb}
\usepackage{amsmath}
\usepackage{graphicx}
\usepackage{epstopdf}
\DeclareGraphicsRule{.bmp}{bmp}{}{} %Insert a kind of figure named .bmp into the text and the bmp files convert from files .jpg
\DeclareGraphicsRule{.jpg}{eps}{}{} %Insert a kind of figure named .jpg into the text and the jpeg files come from Matlab
%\DeclareGraphicsRule{.eps}{eps}{.eps}{}
%\usepackage[dvips,usenames]{color}
\usepackage{subfigure}
\usepackage{amsthm}
\usepackage{stmaryrd}

\theoremstyle{remark}
\newtheorem{example}{\textbf{Example}}[section]
\numberwithin{equation}{section}

\usepackage{color}
\usepackage{datetime}

\makeatletter
\newcommand\figcaption{\def\@captype{figure}\caption}
\newcommand\tabcaption{\def\@captype{table}\caption}
\makeatother

\oddsidemargin=0.1in \evensidemargin=0.1in

%\textheight=25truecm
%\textwidth=18truecm
\usepackage{geometry}
\geometry{left=2cm,right=2cm,top=2cm,bottom=2cm}
\usepackage{algorithm}
\usepackage{multirow}
%\renewcommand{}{}
%\renewcommand{\theequation}{\thesection.\arabic{equation}}

%----------------------------------------------------------------

\def\bq{\begin{equation}}
\def\eq{\end{equation}}
\def\bqs{\begin{equation*}}
\def\eqs{\end{equation*}}
\def\bsqs{\begin{subequations}}
\def\esqs{\end{subequations}}
\def\ba{\begin{aligned}}
\def\ea{\end{aligned}}
\def\br{\begin{eqnarray}}
\def\er{\end{eqnarray}}
\def\brr{\bq\begin{array}{rlll}}
\def\err{\end{array}\eq}

\def\text#1{\hbox{#1}}
% THEOREMS Environments--------------------------------------------
\newtheorem{thm}{Theorem}[section]
\newtheorem{lem}{Lemma}[section]

\newtheorem{rem}{Remark}[section]

\newtheorem{definition}{Definition}[section]
\newtheorem{scheme}{Scheme}[section]

\newcommand{\bsub}{\begin{subequations}}
\newcommand{\esub}{\end{subequations}$\!$}
% MATH -----------------------------------------------------------

% ----------------------------------------------------------------
\title[LEQRK-DG-PC]{Energy stable Runge-Kutta discontinuous Galerkin schemes for fourth order gradient flows}

\author[H.~Liu, P. ~Yin]{Hailiang Liu$^\dagger$ and Peimeng Yin$^\S$}
%\email{hliu@iastate.edu; pyin@wayne.edu}
\address{$^\ddagger$ Iowa State University, Department of Mathematics, Ames, IA 50011} \email{hliu@iastate.edu}
\address{$^\S$ Wayne State University, Department of Mathematics, Detroit, MI 48202} \email{pyin@wayne.edu}
\keywords{Gradient flows, RK method, EQ approach, DG methods, energy stability}
\subjclass{65N12, 65N30,  35K35}

%\date{July 04, 2018}
%% submit to JCP on 12/31/2020

\bibliographystyle{abbrv}

\begin{document}

\begin{abstract} We present unconditionally energy stable Runge-Kutta (RK) discontinuous Galerkin (DG) schemes for solving a class of fourth order gradient flows. Our algorithm is geared toward arbitrarily high order approximations in both space and time, while energy dissipation remains preserved without imposing any restriction on time steps and meshes. We achieve this in two steps. First, taking advantage of the penalty free DG method introduced by Liu and Yin [J  Sci. Comput. 77:467--501, 2018] for spatial discretization, we reformulate an extended linearized ODE system by the energy quadratization (EQ) approach. Second, we apply an s-stage algebraically stable RK method for temporal discretization. The resulting fully discrete DG schemes are linear and unconditionally energy stable. In addition, we introduce a prediction-correction procedure to improve both the accuracy and stability of the scheme. We illustrate the effectiveness of the proposed schemes by numerical tests with benchmark problems.
\end{abstract}

\maketitle

\bigskip

%tableofcontents

%%%%%%%%%%%%%%%%%%%%%%%%%%%%%%%%%%%%%%%%%%%%%%%%%%%%%%%%%%%%

\section{Introduction}
In this paper, we are concerned with arbitrarily high order numerical approximations to a class of fourth order gradient follows,
\begin{align}\label{fourthPDE}
u_t  = - \mathcal{L}^2 u  -\Phi'(u), \; x\in \Omega, \; t>0,
\end{align}
where $\mathcal{L}=-\left(\Delta + a\right)$ is a second-order operator with a physical parameter $a$ and $\Phi$ is a nonlinear function bounded from below. The model equation (\ref{fourthPDE}) governs the evolution of a scalar time-dependent unknown $u=u(x,t)$ in a convex  domain $\Omega \subset \mathbb{R}^d$ and it describes important physical processes in nature. Typical examples  of (\ref{fourthPDE}) include the Swift--Hohenberg equation \cite{SH77} and the extended Fisher-Kolmogorov equation \cite{DS88, PT95}. There are other gradient flows such as the classical Allen--Cahn equation \cite{AC79} and Cahn--Hilliard  equation \cite{CH58}.  The common feature of gradient flow models is that the dynamics is driven by minimizing free energy.

We consider boundary conditions of form
\begin{equation}\label{BC}
\text{(i)} \ u \text{ is periodic};  \quad \text{or (ii)}   \ \partial_\mathbf{n} u=\partial_\mathbf{n} \Delta u=0, \quad x\in \partial \Omega,
\end{equation}
where $\mathbf{n}$ stands for the unit outward normal to the boundary $\partial \Omega$. With such boundary conditions, equation  (\ref{fourthPDE}) indeed features the energy dissipation property:
%a decaying free energy
\bq\label{engdisO}
\frac{d}{dt}\mathcal{E}(u) =  -\int_{\Omega}|u_t|^2dx \leq 0,
\eq
where the free energy
\bq\label{eqpdf}
\mathcal{E}(u) = \int_\Omega \frac{1}{2}\left(\mathcal{L} u \right)^2 + \Phi(u)dx.
\eq
The model equation is nonlinear, its analytical solution is intractable. Hence designing accurate, efficient, and energy stable algorithms to solve it  becomes essential. This energy dissipation law as a fundamental property of (\ref{fourthPDE}) has been explored in high order numerical approximations \cite{FK99, LY19, LY20SAV}. It was shown to be crucial to eliminate numerical results that are not physical. In this paper, we construct unconditionally energy stable and arbitrarily high order schemes to solve the above model problem, for which we use discontinuous Galerkin (DG) methods for spatial discretization, and high order Runge-Kutta (RK) methods for time discretization.

\subsection{Related work} In the literature, there has been rapid development of different methods for simulating gradient flow models including (\ref{fourthPDE}), see e.g., \cite{CP02, CPWV97, GN12, Le17, SEVDPC17,Ey98, WZZ14, XT06, SY10, Y16, ZYGZLY18, SY18, CWWW19}.
They vary either in the spatial discretization or the time discretization, while the latter typically emphasizes preserving the energy dissipation property with no or mild time step restrictions. Let us briefly discuss existing works closely related to what we do here.

{\bf DG spatial discretization.} It is known that for equations containing higher order spatial derivatives, DG discretization entails subtle  difficulties in defining numerical fluxes. Several approaches have been developed to deal with the difficulties, including the local DG (LDG) methods \cite{YS02, DS09, WZS17}, the mixed symmetric interior penalty (SIPG) methods \cite{FK07, FLX16, WKG06, FKU17}, %% the direct mixed DG \cite{LY20} -- against next sentence
and the ultra-weak DG \cite{CS08}. To avoid certain drawbacks of these methods, a penalty free DG method was introduced in \cite{LY18}, where the symmetric structure of the model (\ref{fourthPDE}) is essentially used. This method still inherits the advantages of the usual DG methods, such as higher accuracy, flexibility in hp-adaption, capacity to handle domains with complex geometry \cite{KWLK00, HW07, Ri08, Shu09}, its distinct feature lies in numerical fluxes without using any interior penalty. This is the spatial discretization we shall follow in this work.

%% this below is about how EQ and relatted have been developed over years.
{\bf EQ reformulation and time discretization.}
To keep the energy stability for gradient flow models, several time discretization techniques are  available in the literature, including the so-called convex splitting \cite{Ey98, WZZ14}, and the stability approach \cite{XT06, SY10}. The former leads to  nonlinear schemes, and the later often imposes restrictions on nonlinear terms in the model. The energy quadratization (EQ) approach introduced in \cite{Y16, ZYGZLY18} turned to be rather general that it can be applied to a class of gradient flow models if the free energy is bounded below.
Based on the idea of EQ, the scalar auxiliary variable (SAV) approach was introduced later in \cite{SY18}, where linear systems only with constant coefficients need to be solved. Several extensions of EQ and SAV have been further explored in \cite{CZY18, HAX19, LL19, XYZX19}. Earlier EQ based schemes are mostly up to 2nd order accurate in time, until recent works \cite{GZW20SAV, GZW20SIAM}, where the EQ formulation is combined with the Runge-Kutta methods to achieve high order in time schemes. Note that their schemes are fully nonlinear so that the solution existence and uniqueness are not guaranteed for large time steps. This issue is further addressed in \cite{GZW20} in which the obtained schemes are unconditionally energy stable and linear.  We note that many existing EQ based schemes such as \cite{Y16, ZYGZLY18, GZW20, GZW20SAV, GZW20SIAM} use mainly finite-difference or spectral methods for spatial discretization. New difficulties arise when coupling EQ with the DG discretization, as shown in \cite{LY19, LY20}.

{\bf Integration of DG with EQ.} Integration of EQ formulation with DG for solving (\ref{fourthPDE}) began with \cite{LY19}, where up to 2nd order (in time) IEQ-DG schemes are introduced. These schemes are shown to be unconditionally energy stable independent of the size of time steps, and easy to implement without resorting to any iteration method. A key point for the success in the scheme formulation is that the auxiliary energy variable is updated in point-wise manner, and then projected back into the DG space. This strategy of constructing the IEQ-DG schemes was further extended to solve the Cahn--Hilliard equation \cite{LY20}, where the spatial discretization is based on the DDG method \cite{LY09, LY10}. We note that a direct integration of the DG method with the SAV approach \cite{SY18} will lead to linear systems involving dense coefficient matrices, hence rather expensive to solve. A special procedure was introduced in \cite{LY20SAV} as a rescue, so the resulting SAV-DG schemes become well positioned to solve (\ref{fourthPDE}). However, all these EQ/SAV based DG schemes are no more than second order in time.

%Therefore, the main purpose of this paper is to construct novel energy stable DG schemes coupled with arbitrarily high order time discretization.
%, so that a larger time step can be applied to simulate the gradient flow (\ref{fourthPDE}).
%%%%  below two paragraphs will be merged into the above ....

{\bf Present investigation.} The main purpose of this paper is to construct unconditionally energy stable DG schemes coupled with arbitrarily high order time discretization. We achieve this in two steps. First, taking advantage of the penalty free DG method introduced in \cite{LY19}, we reformulate an extended linearized ODE system by the energy quadratization (EQ) approach. Second, following \cite{GZW20}, we apply an s-stage algebraically stable RK method for temporal  discretization. The resulting fully discrete DG schemes are linear and unconditionally energy stable.

\subsection{Our contribution}
\begin{itemize}
\item We construct the linear energy quadratized Runge-Kutta DG (LEQRK-DG) schemes to solve (\ref{fourthPDE}).
%which employs the penalty free DG method for spatial discretization, the energy quadratization (EQ) formulation, and an s-stage algebraically stable RK method for time discretization,
In this construction we use a higher order interpolation in linearizing the extended EQ system, and a spatial projection for updating the auxiliary variable back into the DG space.
\item We show that the LEQRK-DG schemes feature a discrete energy dissipation law for any time steps, hence these schemes are called  unconditionally energy stable. We also propose LEQRK-DG-PC schemes following a prediction-correction procedure to improve both accuracy and stability of the LEQRK-DG schemes.
\item We conduct experiments on benchmark examples to evaluate the performance of LEQRK-DG-PC method. First, we present numerical results to show the high order of spatial and temporal accuracy of the proposed schemes, and the energy dissipating properties of numerical solutions. Second, we conduct experiments on some two-dimensional pattern formation problems, all of which demonstrate the good performance of the LEQRK-DG-PC method.
\end{itemize}

\subsection{Organization}
%This rest of the paper is organized as follows:
In Section 2, we formulate a unified semi-discrete DG method for the gradient flow (\ref{fourthPDE}) subject to two different boundary conditions. In Section 3, we present LEQRK-DG schemes, and show the energy dissipation law. We also present the LEQRK-DG-PC method following a prediction-correction procedure. In Section 4, we verify the good performance of LEQRK-DG-PC method using several benchmark numerical examples. Finally some concluding remarks are given in Section 5.

\section{Spatial DG discretization }
We derive mathematical formulation for our method. We begin with rewriting (\ref{fourthPDE}) as a mixed form
\begin{equation}\label{mix}
\left \{
\begin{array}{rl}
    u_t = &- \mathcal{L} q-\Phi'(u),\\
    q = & \mathcal{L} u.
\end{array}
\right.
\end{equation}
Such reformulation is not unique, the symmetric feature of (\ref{mix}) is essential for our DG method without the use of any interior penalty \cite{LY18}. Let us recall some conventions of the DG discretization introduced in \cite{LY18}. Let the domain $\Omega$ be a union of  shape regular meshes $\mathcal{T}_h=\{K\}$, with the mesh size $h_K = \text{diam}\{K\}$ and $h=\max_{K} h_K$.  We denote the set of the interior interfaces by $\Gamma^0$,  the set of all boundary faces by $\Gamma^\partial$,  and the discontinuous Galerkin finite element space by
$$
V_h = \{v\in L^2(\Omega) \ : \ v|_{K} \in P^k(K), \ \forall K \in \mathcal{T}_h \},
$$
where $P^k(K)$ denotes the set of polynomials of degree no more than $k$ on element $K$. If the normal vector on the element interface $e\in \partial K_1 \cap \partial K_2$ is oriented from $K_1$ to $K_2$, then the average $\{\cdot\}$ and the jump $[\cdot]$ operator are defined by
$$
\{v\} = \frac{1}{2}(v|_{\partial K_1}+v|_{\partial K_2}), \quad [v]=v|_{\partial K_2}-v|_{\partial K_1}, % \quad \text{on} \ e,\in \partial K_1 \cap \partial K_2,
$$
for any function $v \in V_h$,  where $v|_{\partial K_i} \ (i=1,2)$ is the trace of $v$ on $e$ evaluated from element $K_i$.

Our DG discretization based on the mixed form (\ref{mix}) is to find $(u_h(\cdot, t), q_h(\cdot, t))\in V_h\times V_h  $ such that
\begin{subequations}\label{SemiDG}
\begin{align}
& (u_{ht}, \phi) = -G(q_h, \phi) - (\Phi'(u_h), \phi), \\
& (q_h, \psi)=G(u_h, \psi),
\end{align}
\end{subequations}
for all $\phi, \ \psi \in V_h$.  The precise form of $G(\cdot, \cdot)$ depending on the types of boundary conditions is given as follows:
%except for boundary condition (i) in (\ref{3bd}), for which $\alpha=\beta_1$,
%$$
%G(w,v)=G^0(w,v)+G^b(w,v)
%$$
%with
\bq \label{A0}
\ba
G(w,v)= &\sum_{K\in \mathcal{T}_h} \int_K \left( \nabla w \cdot \nabla v  -a w v \right)dx + \sum_{e\in \Gamma^0} \int_e \left( \{\partial_\nu w\}[v]+ [w]\{\partial_\nu v\} \right)ds\\
&+\frac{\theta}{2} \int_{\Gamma^{\partial} } \left( \{\partial_\nu w\}[v]+ [w]\{\partial_\nu v\} \right)ds,
\ea
\eq
where $\theta=1$ for (i) of (\ref{BC}) and $\theta=0$ for (ii) of (\ref{BC}).
%Here $G^b(\cdot, \cdot)$ are given below for each respective type of boundary conditions:
%\begin{subequations}\label{bd+}
%\begin{align}
%\text{for (i) of (\ref{BC}) } \qquad & G^b(w, v)=\frac{1}{2}  \int_{\Gamma^{\partial} } \left( \{\partial_\nu w\}[v]+ [w]\{\partial_\nu v\} \right)ds, \\
%\text{for (ii) of (\ref{BC})} \qquad  &  G^b(w,v)=0.
%\end{align}
%\end{subequations}
Note that for periodic case (i) the left boundary and the right boundary are considered as same, for which we use the factor $1/2$ to avoid recounting. The initial data for $u_h$ is taken as the piecewise $L^2$ projection, denoted by  $u_h(x,0)=\Pi u_0(x)$.
%In the above scheme formulation $G(q_h, \phi)$ is the DG discretization of $(\mathcal{L} q, \phi)$ and $G(u_h, \psi)$ is the DG discretization of  $(\mathcal{L} u, \psi)$.

The remarkable property of the above DG scheme is that
the discrete energy of form
$$
\mathcal E(u_h, q_h):= \frac{1}{2}\|q_h\|^2 + \int_{\Omega} \Phi(u_h)dx
$$
admits a discrete dissipation law \cite{LY18}:
\begin{align}\label{dd}
\frac{d}{dt}\mathcal{E}(u_h, q_h) =  -\int_{\Omega}|u_{ht}|^2dx \leq 0.
\end{align}
\iffalse
For any given $v \in V_h$, we introduce a projection $L_h$ which is to find $L_h v \in V_h$ such that
\bq\label{lhdef}
(L_h v, \phi) = G(v, \phi), \quad \phi \in V_h.
\eq
\begin{lem}
The operator $L_h$ in (\ref{lhdef}) admits a unique projection.
\end{lem}
\begin{proof}
For a linear system in finite dimensional space, existence is equivalent to its uniqueness, thus we only need to prove its uniqueness. For given $v$, let $\bar{v}$ be the difference of two possible projections, then it follows
$$
(\bar{v}, \phi) = G(0, \phi)=0.
$$
By setting $\phi=\bar{v}$, the equation above gives $\|\bar{v}\|^2=0$, thus $\bar{v}=0$.
\end{proof}
With the operator $L_h$, (\ref{SemiDG}b) can be equivalently represented as
$$
q_h = L_h u_h.
$$
\fi

\section{Time discretization}
This section is devoted to arbitrarily higher order time discretization of the DG formulation (\ref{SemiDG}).

First, we choose  $C_0$ so that $\Phi(w)+C_0 > 0, \ \forall w \in \mathbb{R}$, and introduce
\begin{align}\label{hw}
 H(w)= \frac{\Phi'(w)}{\sqrt{\Phi(w)+C_0}}.
\end{align}
%and $U=\sqrt{\Phi(u_h)+B}$ is well-defined. The corresponding energy now reads as
%\begin{align}\label{ee}
%E(q_h, U)=\frac{1}{2}\|q_h\|^2 + \|U\|^2- B|\Omega| =\mathcal E(u_h, q_h).
%\end{align}
%With this notation we have $\Phi'(u_h)=H(u_h)U$ with
%Instead of using the formula $U=\sqrt{\Phi(u_h)+B}$, we update $U$ by following its differentiation $U_t =  \frac{1}{2}H u_{ht}$.
Then the IEQ reformulation of (\ref{SemiDG}) requires to find $(u_h(\cdot, t), q_h(\cdot, t)) \in V_h  \times V_h$ and $U$ such that
\begin{subequations}\label{SemiDG++}
\begin{align}
(u_{ht}, \phi) = &- G(\phi, q_h) -\left(H(u_h)U,\phi \right),\\
(q_h, \psi) = & G(u_h,\psi),\\
U_{t} =&  \frac{1}{2}H(u_h) u_{ht},
\end{align}
\end{subequations}
for all $\phi, \psi \in V_h $. The initial data for the above scheme is chosen as
$$
u_h(x, 0)=\Pi u_0(x), \quad U(x, 0)=\sqrt{\Phi(u_0(x))+C_0},
$$
where $\Pi$ denotes the piecewise $L^2$ projection into $V_h$. Note that $U \not \in V_h$.

\iffalse
By taking $\phi=u_{ht}$ in  (\ref{SemiDG++}b) and $\psi =q_h$ in $(\ref{SemiDG++}c)_t$, which is a resulting equation from differentiation of
 (\ref{SemiDG++}c) in $t$,  upon further summation one can verify that
\bqs
\frac{d}{dt}E(q_h, U) = -\int_{\Omega}|u_{ht}|^2dx \leq 0,
\eqs
where $E(q_h, U)$
is the discrete energy for the EQ-reformulated semi-discrete DG scheme (\ref{SemiDG++}), which will be discretized in the following three steps.
\fi
There are two steps involved in the time discretization of  (\ref{SemiDG++}). First, we utilize the numerical solutions of $u_h$ for $t\leq t_n$ to obtain a high order approximation $u_h^{*}$, and replace the semi-discrete DG scheme (\ref{SemiDG++})
by
\begin{subequations}\label{LSemiDG}
\begin{align}
(u_{ht}, \phi) = &- G(\phi, q_h) -\left(H(u_h^{*})U,\phi \right),\\
(q_h, \psi) = & G(u_h,\psi),\\
U_{t} =&  \frac{1}{2}H(u_h^{*}) u_{ht}.
\end{align}
\end{subequations}
This linear scheme can be further solved in $t\in (t_n, t_{n+1}]$ by a high order ODE solver.
%we assume that numerical solutions of $u_h$ up to $t\leq t_n$ have been obtained, we solve the semi-discrete DG scheme (\ref{SemiDG++}) in $t\in (t_n, t_{n+1}]$ approximately.
We should point out that the above treatment does not destroy the energy dissipation property. Since for the modified energy functional
$$
E(q_h, U)=\frac{1}{2}\|q_h\|^2 + \|U\|^2 = \mathcal E(u_h, q_h) +C_0|\Omega|
$$
still satisfies
%By taking $\phi=u_{ht}$ in  (\ref{LSemiDG}b) and $\psi =q_h$ in $(\ref{LSemiDG}c)_t$, upon further summation one can verify that scheme (\ref{LSemiDG}) satisfies the following energy dissipation law
\bqs
\frac{d}{dt}E(q_h, U) = -\int_{\Omega}|u_{ht}|^2dx \leq 0.
\eqs
Recall that for ODE of from
$
y_t =f(t, y),
$
the general s-stage Runge--Kutta (RK) method has the form
$$
y^{n+1}=y^n +\tau \sum_{i=1}^s b_i k_i,
$$
where
$$
k_i=f(t_n+c_i \tau,  y^n+\tau\sum_{j=1}^s a_{ij}k_j), \quad i=1,\cdots, s.
$$
Here for consistency the RK coefficients satisfy $c_i = \sum_{j=1}^s a_{ij}$ and  $\sum_{i=1}^s b_i=1$.
For the convenience in applying the RK method to the semi-discrete DG schemes, we introduce the operator $L_h$ by
\begin{align}\label{lh}
(L_h v, \phi)=G(v, \phi) \quad \forall \phi \in V_h.
\end{align}
\subsection{LEQRK-DG schemes}
Applying a s-stage RK method to scheme  (\ref{LSemiDG}), we obtain the following LEQRK-DG scheme.
\begin{scheme}\label{LEQRKDG} (s-stage LEQRK-DG scheme) For given $u_h^{n}, U^{n}$ and $u^{n,*}_{ih}=u^*_h(x,t_n+c_i\tau)$, we find  $(u_h^{n+1}, q_h^{n+1}, U_h^{n+1})$  by
\begin{subequations}\label{RKIEQDG2}
\begin{align}
u_h^{n+1} = & u_h^n + \tau \sum_{i=1}^s b_{i}\xi_{ih},\\
q_h^{n+1} = & L_hu_h^{n+1},\\
U^{n+1}= & U_h^n+ \tau \sum_{i=1}^s b_{i}l_i,\\
U_h^{n+1} = & \Pi U^{n+1},
\end{align}
\end{subequations}
where $\xi_{ih} \in V_h$ and $l_i$ are determined by
  \begin{subequations}\label{RKIEQDG0}
	\begin{align}
	\left( \xi_{ih}, \phi \right) = & - G(\tilde{q}_{ih},\phi)-\left(H(u^{n,*}_{ih})\tilde{U}_i,\phi \right), \quad  i=1,2,\cdots,s\\
	(\tilde{q}_{ih}, \psi) = & G(\tilde{u}_{ih},\psi), \quad \forall \phi, \psi \in V_h, \\
    l_i = & \frac{1}{2} H(u^{n,*}_{ih}) \xi_{ih},
	\end{align}
\end{subequations}
and
\begin{subequations}\label{RKIEQDG1}
\begin{align}
\tilde{u}_{ih} = &u_h^n + \tau \sum_{j=1}^s a_{ij}\xi_{jh},\\
\tilde{U}_i = & U_h^n + \tau \sum_{j=1}^s a_{ij}l_j.
\end{align}
\end{subequations}
\end{scheme}
%Let
%\bq
%\begin{aligned}
%\vec{c} = A\vec{1} \in \mathbf{R}^s, \quad
%A=
%\begin{bmatrix}
%a_{11} & a_{12} & \cdots & a_{1s} \\
%a_{21} & a_{22} & \cdots & a_{2s} \\
%\vdots & \vdots & \ddots & \vdots \\
%a_{s1} & a_{s2} & \cdots & a_{ss}
%\end{bmatrix}\in \mathbf{R}^{s\times s} ,
%\quad
%\vec{b}=
%\begin{bmatrix}
%b_1 \\
%b_2 \\
%\vdots \\
%b_s \\
%\end{bmatrix} \in \mathbf{R}^s,
%\end{aligned}
%\eq
%where $\vec{1}=[1 \ 1 \ \cdots \ 1]^T$, where $T$ denotes the transpose, then we recall the follow stability result.
\begin{definition}\label{rkstab} (Algebraically stable RK method \cite{BB79}) A RK method is algebraically stable if the RK coefficients satisfy stability conditions
\begin{align}\label{as}
b_i\geq 0, \quad i=1,2,\cdots,s, \quad \text{and} \quad M \; \text{is positive semi-definite},
\end{align}
where $M$ is a symmetric matrix with elements
\bq\label{Mdef}
M_{ij}=b_ia_{ij} + b_ja_{ji} - b_ib_j.
\eq
\end{definition}
Next, we show that the algebraically stable LEQRK-DG scheme is unconditionally energy stable.
\begin{thm}
The LEQRK-DG scheme with its RK coefficients satisfying the stability condition
(\ref{as}) is uniquely solvable for any $\tau>0$ and unconditionally energy stable  in the sense that
\bq\label{energystab}
E_h^{n+1} \leq E_h^n -\tau \sum_{i=1}^{s}b_i\|\xi_{ih}\|^2,
\eq
where the energy
$$
E_h^n := E(q_h^n, U_h^n) = \frac{1}{2} \|q_h^n\|^2+\|U_h^n\|^2.
$$
\end{thm}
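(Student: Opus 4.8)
The plan is to prove the two assertions—unique solvability and energy stability—separately, with the energy estimate being the heart of the matter. For solvability, since the scheme is linear in the unknowns $(\xi_{ih}, \tilde{q}_{ih}, l_i, \tilde{u}_{ih}, \tilde{U}_i)$, I would argue that in a finite-dimensional setting existence follows from uniqueness, so it suffices to show that the homogeneous system (with $u_h^n = 0$, $U_h^n = 0$) admits only the trivial solution. Substituting (\ref{RKIEQDG1}) into (\ref{RKIEQDG0}) eliminates the intermediate variables and yields a linear system for the $\xi_{ih}$; testing each equation against an appropriate function and summing, using the positivity coming from the algebraic stability condition, should force $\xi_{ih} = 0$ and hence all intermediate quantities to vanish. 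I expect the relevant quadratic form here to be controlled by the same matrix $M$ that governs the energy decay, so the two parts of the proof share their essential algebraic input.

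For the energy stability (\ref{energystab}), the first step is to compute $\|q_h^{n+1}\|^2$ and $\|U_h^{n+1}\|^2$ in terms of the stage values. Using $q_h^{n+1} = L_h u_h^{n+1}$ together with the update $u_h^{n+1} = u_h^n + \tau \sum_i b_i \xi_{ih}$ and bilinearity of $G$, I would expand
\begin{align*}
\tfrac{1}{2}\|q_h^{n+1}\|^2 = \tfrac{1}{2} G(u_h^{n+1}, u_h^{n+1}) = \tfrac{1}{2}\|q_h^n\|^2 + \tau \sum_i b_i G(\tilde q_{ih}, \xi_{ih}) + \tfrac{\tau^2}{2} \sum_{i,j} b_i b_j G(\xi_{ih}, \xi_{jh}),
\end{align*}
where I have used $G(u_h^n, \xi_{ih}) = G(\tilde u_{ih}, \xi_{ih}) - \tau \sum_j a_{ij} G(\xi_{jh}, \xi_{ih})$ and the symmetry of $G$ to rewrite the cross term. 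The projection step $U_h^{n+1} = \Pi U^{n+1}$ is handled by the fact that $\Pi$ is an $L^2$ projection, so $\|U_h^{n+1}\| \leq \|U^{n+1}\|$; this inequality is exactly what lets us discard the projection and bound $\|U_h^{n+1}\|^2$ by $\|U^{n+1}\|^2$, which I then expand analogously using the update for $U^{n+1}$ and (\ref{RKIEQDG0}c).

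The crucial cancellation is that the stage equations (\ref{RKIEQDG0}a)--(\ref{RKIEQDG0}b) are designed so that, upon testing (\ref{RKIEQDG0}a) with $\phi = \xi_{ih}$ and (\ref{RKIEQDG0}b) with $\psi = \tilde q_{ih}$, the nonlinear coupling term $(H(u^{n,*}_{ih})\tilde U_i, \xi_{ih})$ against the auxiliary variable dynamics $l_i = \tfrac{1}{2} H(u^{n,*}_{ih}) \xi_{ih}$ produces precisely the cross terms needed to pair with the $U$-expansion. Collecting everything, the change $E_h^{n+1} - E_h^n$ should reduce to
\begin{align*}
E_h^{n+1} - E_h^n \leq -\tau \sum_i b_i \|\xi_{ih}\|^2 - \tfrac{\tau^2}{2} \sum_{i,j} M_{ij}\, G(\xi_{ih}, \xi_{jh}) - \tfrac{\tau^2}{2}\sum_{i,j} M_{ij}\,(\,\text{terms in } l\,),
\end{align*}
modulo the precise bookkeeping, where $M_{ij}$ is the stability matrix from (\ref{Mdef}). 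The main obstacle, and the step I would spend the most care on, is verifying that the quadratic remainder is genuinely nonpositive: this requires that $M$ be positive semi-definite (the algebraic stability hypothesis) and simultaneously that the accompanying bilinear form be positive semi-definite in the stage vector. For the $U$-part this is automatic since it produces a sum of squares weighted by $M$; for the $q$-part one needs $G(\cdot,\cdot)$ to be positive semi-definite on $V_h$, which is the coercivity/sign property of the penalty-free DG bilinear form inherited from \cite{LY18} and reflected already in the semi-discrete dissipation law (\ref{dd}). Once both quadratic forms are recognized as nonnegative and weighted by the positive semi-definite $M$, the two remainder sums are discarded to give (\ref{energystab}), and the same nonnegativity is what closes the uniqueness argument for solvability.
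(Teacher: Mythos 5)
Your outline of the $U$-part, the projection inequality $\|U_h^{n+1}\|\le\|U^{n+1}\|$, and the finite-dimensional ``uniqueness implies existence'' reduction all match the paper's argument. But the $q$-part of your energy estimate contains a genuine gap that the paper's proof is specifically engineered to avoid. Two of your identities are false for this scheme. First, $\tfrac12\|q_h^{n+1}\|^2\ne\tfrac12 G(u_h^{n+1},u_h^{n+1})$: by the definition (\ref{lh}) of $L_h$, $G(u_h^{n+1},u_h^{n+1})=(L_hu_h^{n+1},u_h^{n+1})=(q_h^{n+1},u_h^{n+1})$, which is not $(q_h^{n+1},q_h^{n+1})$; likewise $G(\tilde u_{ih},\xi_{ih})=(\tilde q_{ih},\xi_{ih})$, an $L^2$ inner product, not $G(\tilde q_{ih},\xi_{ih})$. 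Second, and fatally, your final remainder $-\tfrac{\tau^2}{2}\sum_{i,j}M_{ij}G(\xi_{ih},\xi_{jh})$ can only be discarded if the matrix $[G(\xi_{ih},\xi_{jh})]_{i,j}$ is positive semi-definite, i.e.\ if $G$ is positive semi-definite on $V_h$. It is not: taking $w=v\equiv c$ constant in (\ref{A0}) (periodic case), all gradient and jump terms vanish and $G(v,v)=-a\,c^2|\Omega|<0$ whenever $a>0$, and the Swift--Hohenberg equation has $a=1$. The semi-discrete law (\ref{dd}) does not ``reflect'' coercivity of $G$; the discrete energy there is $\tfrac12\|q_h\|^2+\int_\Omega\Phi(u_h)dx$, a sum of squares in the auxiliary variable, precisely because $\mathcal{L}=-(\Delta+a)$ is indefinite. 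So the sign property your argument hinges on is unavailable.

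The paper's proof never needs any sign information on $G$. It keeps the computation in the $q$/$L_h$ variables: from $q_h^{n+1}-q_h^n=\tau\sum_i b_i L_h\xi_{ih}$ and $\tilde q_{ih}=L_h\tilde u_{ih}$ one writes $\tfrac12\|q_h^{n+1}\|^2-\tfrac12\|q_h^n\|^2=(q_h^{n+1}-q_h^n,q_h^{n+1})-\tfrac12\|q_h^{n+1}-q_h^n\|^2$, and then uses the key identity $G(\xi_{ih},L_h\xi_{jh})=(L_h\xi_{ih},L_h\xi_{jh})$ (the definition of $L_h$ tested against $L_h\xi_{jh}\in V_h$) to arrive at the remainder $-\tfrac{\tau^2}{2}\sum_{i,j}M_{ij}(L_h\xi_{ih},L_h\xi_{jh})$. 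Here $M$ is paired with the Gram matrix $P_{ij}=(L_h\xi_{ih},L_h\xi_{jh})$, and for a symmetric positive semi-definite $M$ and any Gram matrix $P$ one has $\sum_{i,j}M_{ij}P_{ij}=\mathrm{tr}(MP)\ge0$; the same holds for the remainder $\sum_{i,j}M_{ij}(l_i,l_j)$. That is the whole point of introducing $L_h$: the quadratic remainders are $L^2$ Gram forms, nonnegative regardless of the indefiniteness of $G$. To repair your proof, replace the expansion of $G(u_h^{n+1},u_h^{n+1})$ by the expansion of $\|q_h^{n+1}\|^2$ in terms of the $L_h\xi_{ih}$; after that substitution your bookkeeping with $M_{ij}$ goes through and the same nonnegativity closes your solvability argument as well.
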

\begin{proof} In order to prove  (\ref{energystab}), we use $\|U_h\|\leq ||U\|$ to obtain
$$
E_h^{n+1}-E_h^n \leq  \frac{1}{2} (\|q_h^{n+1}\|^2-\|q_h^n\|^2) +
(\|U^{n+1}\|^2 - \|U_h^n\|^2)
$$
and estimate two terms on the right, respectively. First we have
\bqs
\ba
\frac{1}{2}\left( \|q_h^{n+1}\|^2-\|q_h^{n}\|^2\right)=
%& \frac{1}{2}\left( \|q_h^{n+1}\|^2-\|q_h^{n}\|^2 + \|q_h^{n+1}-q_h^{n}\|^2 - \|q_h^{n+1}-q_h^{n}\|^2 \right)\\
%=
& (q_h^{n+1}-q_h^{n},q_h^{n+1})- \frac{1}{2}\|q_h^{n+1}-q_h^{n}\|^2\\
%= & G(u_h^{n+1},q_h^{n+1}-q_h^{n})- \frac{1}{2}\|q_h^{n+1}-q_h^{n}\|^2\\
= & G(u_h^{n+1}-u_h^{n},q_h^{n+1})- \frac{1}{2}\|q_h^{n+1}-q_h^{n}\|^2\\
= & \tau \sum_{i=1}^{s}b_i G(q_h^{n+1}, \xi_{ih}) - \frac{1}{2}\|q_h^{n+1}-q_h^{n}\|^2.\\
\ea
\eqs
Note that from (\ref{RKIEQDG2}ab), (\ref{RKIEQDG0}b) and (\ref{RKIEQDG1}a), we have
\bqs
\ba
q_h^{n+1} = & L_hu_h^{n+1} = L_hu_h^n + \tau \sum_{j=1}^s b_{j}L_h \xi_{jh}, \\
\tilde{q}_{ih} =& L_h\tilde{u}_{ih} = L_h u_h^n + \tau \sum_{j=1}^s a_{ij}L_h\xi_{jh}.
\ea
\eqs
This gives
\bqs
q_h^{n+1} = \tilde{q}_{ih} + \tau \left( \sum_{j=1}^s b_{j}L_h \xi_{jh} -\sum_{j=1}^s a_{ij}L_h\xi_{jh} \right),
\eqs
which implies
\bq\label{qhn1}
G(q_h^{n+1}, \xi_{ih}) = G(\tilde{q}_{ih}, \xi_{ih}) + \tau \left( \sum_{j=1}^s b_{j}G(L_h \xi_{jh},\xi_{ih}) -\sum_{j=1}^s a_{ij}G(L_h \xi_{jh} ,\xi_{ih})\right).
\eq
Setting $\phi=-\xi_{ih}$ in (\ref{RKIEQDG0}a), we have
\bq\label{xin1}
\ba
-\|\xi_{ih}\|^2 = & G(\tilde{q}_{ih},\xi_{ih}) + \left(H(u^{n,*}_{ih})\tilde{U}_i,\xi_{ih} \right) \\
= & G(\tilde{q}_{ih},\xi_{ih}) + 2(\tilde{U}_i, l_i),
\ea
\eq
where we have used (\ref{RKIEQDG0}c) in the last step.
Combining (\ref{xin1}) with (\ref{qhn1}) gives
\bqs
G(q_h^{n+1}, \xi_{ih}) = -\|\xi_{ih}\|^2 - 2(\tilde{U}_i, l_i) + \tau \left( \sum_{j=1}^s b_{j}G(L_h\xi_{jh},\xi_{ih}) -\sum_{j=1}^s a_{ij}G(L_h\xi_{jh} ,\xi_{ih})\right).
\eqs
Further, using  (\ref{RKIEQDG2}ab), we obtain
\bqs
\ba
\frac{1}{2}\|q_h^{n+1}-q_h^{n}\|^2 = &  \frac{1}{2}\left( L_hu_h^{n+1} - L_h u_h^{n}, L_h u_h^{n+1} - L_h u_h^{n}\right) \\
= &
%\frac{1}{2}\left( \tau\sum_{i=1}^s b_{i}L_h \xi_{ih}, \tau\sum_{j=1}^s b_{j}L_h \xi_{jh} \right)=
\frac{1}{2}\tau^2  \sum_{i,j=1}^s b_ib_j \left( L_h \xi_{ih},  L_h \xi_{jh} \right) \\
= & \frac{1}{2}\tau^2  \sum_{i,j=1}^s b_ib_j G(\xi_{ih},  L_h \xi_{jh}).
\ea
\eqs
For the second term we use (\ref{RKIEQDG2}c) to obtain
\bqs
\ba
\|U^{n+1}\|^2-\|U_h^{n}\|^2=& 2(U^{n+1}, U^{n+1}-U_h^{n})-\|U^{n+1}-U_h^{n}\|^2 \\
= & 2(U^{n+1}, \tau \sum_{i=1}^s b_{i}l_i)-(U^{n+1}-U_h^{n},U^{n+1}-U_h^{n}) \\
= & 2\tau \sum_{i=1}^s b_{i} (U^{n+1}, l_i)-\tau^2  \sum_{i,j=1}^s b_ib_j (l_i, l_j).
\ea
\eqs
Putting together all these estimates,
\bqs
\ba
E_h^{n+1}-E_h^n %= & \left(\frac{1}{2} \|q_h^{n+1}\|^2+\|U^{n+1}\|^2\right) -\left( \frac{1}{2} \|q_h^n\|^2+\|U_h^n\|^2 \right) \\
\leq & - \frac{1}{2}\tau^2  \sum_{i,j=1}^s b_ib_j G(\xi_{ih},  L_h \xi_{jh}) -\tau^2  \sum_{i,j=1}^s b_ib_j (l_i, l_j) \\
& \qquad  + \tau \sum_{i=1}^{s}b_i G(q_h^{n+1}, \xi_{ih}) + 2\tau \sum_{i=1}^s b_{i} (U^{n+1}, l_i)\\
\leq & -\tau^2  \sum_{i,j=1}^s b_ib_j (l_i, l_j) -  \tau \sum_{i=1}^s b_{i}\|\xi_{ih}\|^2 + 2\tau \sum_{i=1}^s b_{i} (U^{n+1}-\tilde{U}_i, l_i) \\
& + \tau^2 \left( \frac{1}{2}\sum_{i,j=1}^s b_{i} b_{j}G(L_h\xi_{jh},\xi_{ih}) -\sum_{i,j=1}^s b_{i}a_{ij}G(L_h\xi_{jh},\xi_{ih}) \right).
\ea
\eqs
%From , we have
%\bqs
%\ba
%\|U^{n+1}-U_h^{n}\|^2 = & (U^{n+1}-U_h^{n},U^{n+1}-U_h^{n}) \\ %= \left(\tau \sum_{i=1}^s b_{i}l_i, \tau \sum_{j=1}^s b_{j}l_j\right)\\
%= & \tau^2  \sum_{i,j=1}^s b_ib_j (l_i, l_j).
%\ea
%\eqs
Subtracting (\ref{RKIEQDG1}b) from (\ref{RKIEQDG2}c) gives
\bqs
\ba
U^{n+1}-\tilde{U}_i = \tau \sum_{j=1}^s b_{j}l_j - \tau \sum_{j=1}^s a_{ij}l_j.
\ea
\eqs
Hence
\bqs
\ba
2\tau \sum_{i=1}^s b_{i} (U^{n+1}-\tilde{U}_i,l_i)  = 2\tau^2 \left( \sum_{i,j=1}^s b_{i}b_{j}(l_i, l_j) - \sum_{i,j=1}^s b_{i}a_{ij}(l_i, l_j)\right).
\ea
\eqs
Combining the results above, we have
\begin{align*}
E_h^{n+1}-E_h^n & \leq - \tau \sum_{i=1}^s b_{i}\|\xi_{ih}\|^2 - \frac{\tau^2}{2} \sum_{i,j=1}^s M_{ij} (L_h\xi_{ih},  L_h\xi_{jh}) - \tau^2 \sum_{i,j=1}^s M_{ij} (l_i, l_j) \\
& \leq - \tau \sum_{i=1}^s b_{i}\|\xi_{ih}\|^2,
\end{align*}
where we have used (\ref{as}).
%Note that $\sum_{i,j=1}^s M_{ij} G(\xi_{ih},  L\xi_{jh}) \geq 0$ and $\sum_{i,j=1}^s M_{ij} (l_i, l_j)\geq 0$ by condition (ii), thus (\ref{egst1}) leads to the energy stability (\ref{energystab}).

It is left to prove the unique solvability of the fully discrete scheme, for which it suffices to prove the linear scheme admits only a zero solution if $u_h^n=0$ and $U^n=0$.  In fact from $E_h^n=0$, the energy dissipation inequality above tells that
$$
\frac{1}{2}\|q_h^{n+1}\|^2 + \|U^{n+1}\|^2  + \tau \sum_{i=1}^s b_{i}\|\xi_{ih}\|^2 + \frac{\tau^2}{2} \sum_{i,j=1}^s M_{ij} (L_h\xi_{ih},  L_h\xi_{jh}) + \tau^2 \sum_{i,j=1}^s M_{ij} (l_i, l_j)\leq 0.
$$
This therefore ensures that
$$
q_h^{n+1}=0, U^{n+1}=0, \quad i=1,\cdots, s,
$$
and $ b_i \xi_{ih}=0$ for $i=1\cdots s$, so $u_h^{n+1}=\tau \sum_{i=1}^s b_i\xi_{ih}=0$.
\end{proof}

\begin{rem}\label{Bchoice} To ensure the energy stability it suffices to take $C_0 >-\inf \Phi(u)$.
%Different choices for $C_0$ work equally well to ensure the energy stability.
However, a larger $C_0$ can help to reduce the spatial projection error when associated with
the DG discretization. %will lead to a smaller error when taking the piecewise $L^2$ projection for $U$.
For example, let $\Pi U_0$ be the piecewise $L^2$ projection of $U_0$ in $V_h$ based on $P^1$ polynomials, then the projection error is known as
\bq\label{projerr}
\|U_0-\Pi U_0\| = Ch|U_0|_{H^1(\Omega)},
\eq
where $C$ independent of $h$ and $U_0$. Note that
$$
|U_0|^2_{H^1(\Omega)} = \sum_{K \in \Omega}\int_K\left( \frac{\Phi'(u_0)}{\sqrt{\Phi(u_0)+C_0}} \right)^2|\nabla u_0|^2dx,
$$
from which we see that a larger $C_0$ will reduce the total error.
%we can find that the larger the constant $C_0$ is, the smaller the $L^2$ projection error in (\ref{projerr}) becomes.
\end{rem}
\begin{rem} System (\ref{RKIEQDG0}) may be put as a closed linear system as
%Instead of solving (\ref{RKIEQDG0}) directly, we first solve the linear system in terms of $\xi_{ih}, \tilde{q}_{ih}$, $i=1,2,\cdots,s$,
    \bqs
    \begin{aligned}
      \left( \xi_{ih}, \phi \right) + \frac{\tau}{2} \sum_{j=1}^s a_{ij}\left(H(u^{n,*}_{ih})^2 \xi_{jh},\phi \right) + G(\tilde{q}_{ih},\phi)= & -\left(H(u^{n,*}_{ih})U_h^n,\phi \right),\\
	\tau \sum_{j=1}^s a_{ij}G(\xi_{jh},\psi) -(\tilde{q}_{ih}, \psi) = & -G(u_h^n,\psi),
    \end{aligned}
    \eqs
where the first equation is obtained by plugging (\ref{RKIEQDG1}b) as well as (\ref{RKIEQDG0}c) into (\ref{RKIEQDG0}a), and the second equation is obtained by plugging (\ref{RKIEQDG1}a) into (\ref{RKIEQDG0}b).
\end{rem}
\begin{rem} A variety of algebraically stable RK methods have been introduced in the literature, see, e.g.,  \cite{BB79}.  Here we present three methods in the form of the Butcher tableau. Qin and Zhang's two-stage, second order diagonally implicit RK method \cite{QZ92}
\bq\label{RK2nd}
\begin{array}{c|c}
\mathbf{c} &  A \\
\hline
 & \mathbf{b^T} \\
\end{array}
=
\begin{array}{c|cc}
\frac{1}{4} &  \frac{1}{4} & 0 \\
\frac{3}{4} &  \frac{1}{2} & \frac{1}{4} \\
\hline
 & \frac{1}{2} & \frac{1}{2} \\
\end{array},
\quad
M=\left[
\begin{array}{cc}
0 & 0 \\
0 & 0\\
\end{array}
\right],
\eq
Crouzeix's two-stage, third order diagonally implicit RK method \cite{N74},
\bq\label{RK3rd}
\begin{array}{c|c}
\mathbf{c} &  A \\
\hline
 & \mathbf{b^T} \\
\end{array}
=
\begin{array}{c|cc}
\frac{1}{2}+\frac{\sqrt{3}}{6} &  \frac{1}{2}+\frac{\sqrt{3}}{6} & 0 \\
\frac{1}{2}-\frac{\sqrt{3}}{6} &  -\frac{\sqrt{3}}{3} & \frac{1}{2}+\frac{\sqrt{3}}{6} \\
\hline
 & \frac{1}{2} & \frac{1}{2} \\
\end{array},
\quad
M=\left(\frac{1}{4}+\frac{\sqrt{3}}{6}\right)\left[
\begin{array}{cc}
1 & -1 \\
-1 & 1\\
\end{array}
\right],
\eq
and the two-stage, fourth order Gauss-Legendre method \cite{I96}:
\bq\label{RK4th}
\begin{array}{c|c}
\mathbf{c} &  A \\
\hline
 & \mathbf{b^T} \\
\end{array}
=
\begin{array}{c|cc}
\frac{1}{2}-\frac{\sqrt{3}}{6} &  \frac{1}{4} & \frac{1}{4}-\frac{\sqrt{3}}{6} \\
\frac{1}{2}+\frac{\sqrt{3}}{6} &  \frac{1}{4}+\frac{\sqrt{3}}{6} & \frac{1}{4} \\
\hline
 & \frac{1}{2} & \frac{1}{2} \\
\end{array},
\quad
M=\left[
\begin{array}{cc}
0 & 0 \\
0 & 0\\
\end{array}
\right].
\eq
These RK methods will be adopted in our numerical experiments.
\end{rem}
\begin{rem}
For RK methods with Butcher tableau
\bqs\label{OneRK2nd}
\begin{array}{c|c}
\mathbf{c} &  A \\
\hline
 & \mathbf{b^T} \\
\end{array}
=
\begin{array}{c|c}
1 &  1 \\
\hline
 & 1 \\
\end{array},
\quad
\begin{array}{c|c}
\mathbf{c} &  A \\
\hline
 & \mathbf{b^T} \\
\end{array}
=
\begin{array}{c|c}
\frac{1}{2} & \frac{1}{2} \\
\hline
 & 1 \\
\end{array},
\eqs
Scheme \ref{LEQRKDG} reduces to the first order, second order IEQ-DG schemes in \cite{LY19}, respectively.
\end{rem}
To complete Scheme \ref{LEQRKDG}, we discuss how to prepare $u_{h}^*$, hence $u_{ih}^{n, *}$. For $n=0$, we take
$$
u_h^0=\Pi u_0, \quad u_{ih}^{0, *}=u_h^0.
$$
For $n\geq 1$, we construct a Lagrangian interpolating polynomial $u_h^*$ based on $s+2$ points:
$$
(t_{n-1}, u_h^{n-1}), (t_{n-1}+c_i\tau, \tilde u_{ih}), (t_n, u^n_h),$$
and set
$$
u_{ih}^{n, *}=u_h^*(x, t_n+c_i\tau).
$$
However, two drawbacks might show up with this simple interpolation: (i) when $s$  is large, interpolating polynomials may be highly oscillatory, leading to instability or inaccuracy of the extrapolation from $[t_{n-1}, t_n]$ to $(t_n, t_{n+1}]$; (ii) the order of accuracy of the interpolation can be lower than the order of the RK method, putting another restriction on the overall accuracy of the resulting scheme. The Gauss-Legendre method in (\ref{RK4th}) is a such example.

\subsection{LEQRK-DG-PC method}
%One issue in Scheme \ref{LEQRKDG} is how to obtain $u^{n,*}_{ih}$ accurately such that the numerical solution can achieve expected accuracy. An interpolation of $u^{n,*}_{ih}$ based on solution $u_h(x, t)$ at $t_{n-1}$, $t_{n-1}+c_i \tau$, $t_n$ $(i=1,2,\cdots,s)$ may not guarantee the optimal accuracy \cite{GZW20}.
In order to improve the stability as well as the accuracy of Scheme \ref{LEQRKDG} we propose a prediction-correction method, following the strategy in \cite{GO16, GZW20}.
To do so, we also need the Lagrangian interpolation %to obtain the interpolation polynomial $u_h^*(x,t)$ based on the interpolation points $(t_{n-1}, u_h^{n-1})$, $(t_{n-1}+c_i\tau, \tilde{u}_{ih})$ and $(t_{n}, u_h^{n})$, $i=1,2,\cdots, s$ and the interpolation
polynomial $U_h^*(x,t)$ based on the interpolation points
$$
(t_{n-1}, U_h^{n-1}), (t_{n-1}+c_i\tau, \tilde{U}_{ih}) \text{ and } (t_{n}, U_h^{n}), i=1,2,\cdots, s.
$$
Here, $\tilde{U}_{ih}=\Pi \tilde{U}_{i}$ is the piecewise $L^2$ projection of $\tilde{U}_{i}$ in (\ref{RKIEQDG1}b) from $(t_{n-1},t_n]$.

\begin{scheme}\label{LEQRKDGPC} (s-stage LEQRK-DG-PC scheme)
For given $u_h^n, U^n$, $u_h^*(x,t_n+c_i \tau)$ and $U_h^*(x,t_n+c_i \tau)$, $i=1,2,\cdots,s$, a s-stage LEQRK-DG-PC scheme is given as follows.
\begin{description}
  \item[Prediction]   Set $\tilde u_{ih}^{0} = u_h^*(x,t_n+c_i \tau), \tilde U_{ih}^{0} = U_h^*(x,t_n+c_i \tau)$, we iteratively
  %find $\xi_{ih}^{m+1}, \tilde{q}_{ih}^{m+1}, u_{ih}^{m+1}, U_{ih}^{m+1}\in V_h$ such that for $\forall \phi, \psi \in V_h$ and $i=1,2,\cdots,s$,
  solve %for $m=0$ to $m=L-1$,
  \begin{subequations}\label{PRKIEQDG0}
	\begin{align}
	\left(\xi_{ih}^{m+1}, \phi \right) = & - G(\tilde{q}_{ih}^{ m+1},\phi)-\left(H(\tilde u^{m}_{ih}) \tilde U_{ih}^{m},\phi \right),\\
	(\tilde{q}_{ih}^{m+1}, \psi) = & G(\tilde{u}_{ih}^{m+1},\psi),\quad \forall \phi, \psi \in V_h,
	\end{align}
\end{subequations}
and
\begin{subequations}\label{PRKIEQDG1}
\begin{align}
\tilde{u}_{ih}^{m+1} = &u_h^n + \tau \sum_{j=1}^s a_{ij}\xi_{jh}^{m+1},\\
l_i^{m+1} = & \frac{1}{2} H(\tilde u^{m+1}_{ih}) \xi_{ih}^{m+1},\\
\tilde U_i^{m+1} = & U_h^n + \tau \sum_{j=1}^s a_{ij}l_j^{m+1},\\
\tilde U_{ih}^{m+1} = & \Pi \tilde U_i^{m+1}.
\end{align}
\end{subequations}
If $\max_i \|\tilde u_{ih}^{m+1} - \tilde u_{ih}^{m}\|_\infty<Tol$, we stop the iteration and set $u^{n,*}_{ih}=\tilde u_{ih}^{m+1}$; otherwise, we set $u^{n,*}_{ih}=\tilde u_{ih}^{L}$, where  $L> 0$ is a priori given integer.
\item[Correction]
With the predicted $u^{n,*}_{ih}$, we apply Scheme \ref{LEQRKDG}
to update the numerical solutions, and also
\iffalse
we find $\xi_{ih}, \tilde{q}_{ih}, \tilde{u}_{ih}, \tilde{U}_{ih} \in V_h$ and $l_i, \tilde{U}_i$ such that for $\forall \phi, \psi \in V_h$ and $i=1,2,\cdots,s$,
  \begin{subequations}\label{CRKIEQDG0}
	\begin{align}
	\left( \xi_{ih}, \phi \right) = & - G(\tilde{q}_{ih},\phi)-\left(H(u^{n,*}_{ih})\tilde{U}_i,\phi \right),\\
	(\tilde{q}_{ih}, \psi_i) = & G(\tilde{u}_{ih}^{n+1},\psi_i),\\
    l_i^{n+1} = & \frac{1}{2} H(u^{n,*}_{ih}) \xi_{ih}^{n+1},
	\end{align}
\end{subequations}
where
\begin{subequations}\label{CRKIEQDG1}
\begin{align}
\tilde{u}_{ih} = &u_h^n + \tau \sum_{j=1}^s a_{ij}\xi_{jh},\\
\tilde{U}_i = & U_h^n + \tau \sum_{j=1}^s a_{ij}l_j,\\
\tilde{U}_{ih} = & \Pi \tilde{U}_i.
\end{align}
\end{subequations}
(ii) $(u_h^{n+1}, q_h^{n+1}, U_h^{n+1}) \in (V_h, V_h, V_h)$ is updated via (\ref{RKIEQDG2}).
\fi
set
$$
\tilde{U}_{ih} = \Pi \tilde{U}_i
$$
for the update in the next time step.
%\begin{subequations}\label{RKIEQDG2}
%\begin{align}
%u_h^{n+1} = & u_h^n + \tau \sum_{i=1}^s b_{i}\xi_{ih}^{n+1},\\
%q_h^{n+1} = & L_hu_h^{n+1},\\
%U^{n+1}= & U_h^n+ \tau \sum_{i=1}^s b_{i}l_i^{n+1},\\
%U_h^{n+1} = & \Pi U^{n+1}.
%\end{align}
%\end{subequations}

\end{description}
\end{scheme}

\begin{rem}
If $L=0$, the LEQRK-DG-PC scheme reduces to Scheme \ref{LEQRKDG}.
%is equivalent to skip the prediction step and set $u_{ih}^{n,*}=u_{ih}^{0}=u_I(x,t_n+c_i \tau)$ directly.
\end{rem}

\section{Numerical results}
In this section, we numerically test the orders of convergence of the proposed LEQRK-DG-PC schemes. Further, we apply the schemes to the 2D Swift-Hohenberg equation in order to recover some known patterns, while
we also verify the unconditional energy stability at the same time.

The experimental orders of convergence (EOC) at $T=n\tau$ in terms of $h$ and $\tau$ are determined respectively by
$$
\text{EOC}=\log_2 \left( \frac{e_h^n}{e_{h/2}^n}\right),  \quad \text{EOC}=\log_2 \left( \frac{e_h^n}{e_{h}^{2n}}\right),
$$
where $e_h^n$ represents the error between the numerical solution $u_h^n(x, y)$ and the exact solution $u(x,y,t^n)$, and $e^{2n}_h$
 corresponds to the numerical solution with $\tau/2$ as the time step.

The Swift-Hohenberg equation is a special case of model equation (\ref{fourthPDE}) with $a=1$  and
\bq\label{shphi}
\Phi(u)=- \frac{\epsilon}{2}u^2 -\frac{g}{3}u^3 +\frac{u^4}{4},
\eq
that is,
\begin{align}\label{sh}
u_t  = -\Delta^2 u -2 \Delta u +(\epsilon-1)u +gu^2 -u^3.
\end{align}
Here physical parameters are $g\geq 0$ and $\epsilon \in \mathbb{R}$, which together with the size of the domain play an important role in pattern selection;  see,  e.g.,  \cite{BPT01, PR04, MD14}.
In our numerical tests, we focus on (\ref{shphi}) with $g \geq 0$ and $\epsilon>0$. This function has double wells with two local minimal values at
$u_\pm=  \frac{g\pm\sqrt{g^2+4\epsilon}}{2}$ such that $\Phi'(u_\pm)=0$, and
\bqs
\Phi(u) \geq \min \{\Phi(u_\pm)\}=
\min_{v=u_\pm} \left( -\frac{1}{12} \left(gv(g^2+4\epsilon)+\epsilon(g^2+3\epsilon) \right) \right)=-b,
\eqs
so it suffices to choose the method parameter $C_0\geq b$. In all numerical examples $b<1$, together with the discussion in Remark \ref{Bchoice}, we will take $C_0=10^3$ for all examples.

\begin{example}\label{Ex1dAccS} (Spatial accuracy test)
Consider the Swift-Hohenberg equation (\ref{sh}) with an added source of form
$$
f(x,y, t)=- \varepsilon v -gv^2+ v^3, \quad v: =e^{-t/4}\sin(x/2)\sin(y/2),
$$
subject to initial data
\bq\label{initex1}
u_0(x,y) = \sin(x/2)\sin(y/2).
\eq
This problem has an explicit solution
\bq\label{uexact}
u(x,y,t) = e^{-t/4}\sin(x/2)\sin(y/2).
\eq
To be specific, we take $\varepsilon=0.025$, $g=0$, and domain $\Omega=[-2\pi, 2\pi]^2$ with periodic boundary conditions. We shall test the LEQRK-DG-PC scheme based on the RK method with Butcher tableau (\ref{RK4th}) and $P^k$ polynomials. Note that due to the source term, we need to add
$$
(f(\cdot, t^{n}+b_i\tau), \phi),
$$
to the right hand side of both (\ref{RKIEQDG0}a) and (\ref{PRKIEQDG0}a).
In prediction step, we take $L=10$ and tolerance $Tol=10^{-10}$.
This example is used to test the spatial accuracy, using polynomials of degree $k$ with $k =1,\ 2,\ 3$ on 2D rectangular meshes.
%We take $\varepsilon=0.025$, $g=0$,  and domain $\Omega=[-2\pi, 2\pi]^2$ with periodic boundary conditions.
Both errors and orders of convergence at $T=0.01$ are reported in Table \ref{tab2dacc}. These results confirm the $(k+1)$th orders of accuracy in $L^2, L^\infty$ norms.

\begin{table}[!htbp]\tabcolsep0.03in
\caption{$L^2, L^\infty$ errors and EOC at $T = 0.01$ with mesh $N\times N$.}
\begin{tabular}[c]{||c|c|c|c|c|c|c|c|c|c||}
\hline
\multirow{2}{*}{$k$} & \multirow{2}{*}{$\tau$}&   \multirow{2}{*}{ } & N=8 & \multicolumn{2}{|c|}{N=16} & \multicolumn{2}{|c|}{N=32} & \multicolumn{2}{|c||}{N=64}  \\
\cline{4-10}
& & & error & error & order & error & order & error & order\\
\hline
\multirow{2}{*}{1}  & \multirow{2}{*}{1e-3} & $\|u-u_h\|_{L^2}$ &  3.73985e-01 & 9.73764e-02 & 1.94 & 2.39651e-02 & 2.02 & 5.95959e-03 & 2.01  \\
\cline{3-10}
 & & $\|u-u_h\|_{L^\infty}$  & 1.38441e-01 & 3.83905e-02 & 1.85 & 9.61382e-03 & 2.00 & 2.40153e-03 & 2.00  \\
\hline
\hline
\multirow{2}{*}{2}  & \multirow{2}{*}{1e-4} & $\|u-u_h\|_{L^2}$ & 7.10034e-02 & 1.50739e-02 & 2.24 & 2.02727e-03 & 2.89 & 2.58614e-04 & 2.97  \\
\cline{3-10}
 & & $\|u-u_h\|_{L^\infty}$  & 2.41033e-02 & 3.22536e-03 & 2.90 & 4.40302e-04 & 2.87 & 5.63426e-05 & 2.97  \\
 \hline
\hline
\multirow{2}{*}{3} & \multirow{2}{*}{2e-5}  & $\|u-u_h\|_{L^2}$ & 1.20130e-02 & 1.13186e-03 & 3.41 & 7.72408e-05 & 3.87 & 4.94306e-06 & 3.97  \\
\cline{3-10}
 & & $\|u-u_h\|_{L^\infty}$   & 3.85682e-03 & 3.68735e-04 & 3.39 & 2.43500e-05 & 3.92 & 1.53904e-06 & 3.98  \\
\hline
\end{tabular}\label{tab2dacc}
\end{table}

\end{example}

\begin{example}\label{Ex1dAccT} (Temporal accuracy test)
Consider the Swift-Hohenberg equation (\ref{sh}) with an added source of form
$$
f(x,y, t)=- \varepsilon v -gv^2+ v^3, \quad v: =e^{-49t/64}\sin(x/4)\sin(y/4),
$$
subject to initial data
\bq\label{initex12}
u_0(x,y) = \sin(x/4)\sin(y/4).
\eq
Its exact solution is given by
$$
u(x,y,t) = e^{-49t/64}\sin(x/4)\sin(y/4).
$$
We want to test the temporal accuracy of the LEQRK-DG-PC method,
for which we take $\varepsilon=0.025, g=0$, and domain $\Omega= [-4\pi, 4\pi]^2$ with periodic boundary conditions.
We apply the two-stage LEQRK-DG-PC schemes based on second, third and fourth order RK methods with Butcher tableau (\ref{RK2nd})-(\ref{RK4th}) and $P^3$ polynomials.
Similar to Example \ref{Ex1dAccS}, we  also need to add
$$
(f(\cdot, t^{n}+b_i\tau), \phi),
$$
to the right hand side of both (\ref{RKIEQDG0}a) and (\ref{PRKIEQDG0}a).
We take time steps $\tau=2^{-m}$ for $2\leq m\leq 5$ and mesh size $64\times 64$. In the prediction step, we choose the tolerance $Tol=10^{-10}$ and the value of $L$ depends on the specific RK methods. The $L^2, L^\infty$ errors and orders of convergence at $T=1.5$ are shown in Table \ref{timeacc}, and these results confirm that the schemes as tested can achieve the optimal orders of convergence in time.

\begin{table}[!htbp]\tabcolsep0.03in
\caption{$L^2, L^\infty$ errors and EOC at $T = 1.5$ with time step $\tau$.}
\begin{tabular}[c]{||c|c|c|c|c|c|c|c|c|c||}
\hline
\multirow{2}{*}{RK} & \multirow{2}{*}{~$L$~}&   \multirow{2}{*}{ } & $\tau=2^{-2}$ & \multicolumn{2}{|c|}{$\tau=2^{-3}$} & \multicolumn{2}{|c|}{$\tau=2^{-4}$} & \multicolumn{2}{|c||}{$\tau=2^{-5}$}  \\
\cline{4-10}
& & & error & error & order & error & order & error & order\\
\hline
\multirow{2}{*}{(\ref{RK2nd})}  & \multirow{2}{*}{$0$} & $\|u-u_h\|_{L^2}$ &  5.84575e-02 & 1.29975e-02 & 2.17 & 3.21365e-03 & 2.02 & 8.05270e-04 & 2.00  \\
\cline{3-10}
 & & $\|u-u_h\|_{L^\infty}$  & 5.51717e-03 & 1.13568e-03 & 2.28 & 2.80093e-04 & 2.02 & 7.04859e-05 & 1.99  \\
\hline
\hline
\multirow{2}{*}{(\ref{RK3rd})}  & \multirow{2}{*}{$2$} & $\|u-u_h\|_{L^2}$ &  6.49591e-03 & 7.15397e-04 & 3.18 & 8.88739e-05 & 3.01 & 9.69107e-06 & 3.20  \\
\cline{3-10}
 & & $\|u-u_h\|_{L^\infty}$  & 7.59053e-04 & 1.09547e-04 & 2.79 & 1.37766e-05 & 2.99 & 1.46945e-06 & 3.23  \\
\hline
\hline
\multirow{2}{*}{(\ref{RK4th})}  & \multirow{2}{*}{$2$} & $\|u-u_h\|_{L^2}$ & 2.10020e-03 & 1.38306e-04 & 3.92 & 7.30941e-06 & 4.24  & $--$ & $--$ \\
\cline{3-10}
 & & $\|u-u_h\|_{L^\infty}$  & 3.43273e-04 & 2.20772e-05 & 3.96 & 1.42833e-06 & 3.95 & $--$ & $--$ \\
\hline
\end{tabular}\label{timeacc}
\end{table}

\end{example}

\begin{example}\label{Ex2dPatt2} (Rolls and Hexagons)
In this example, we simulate the formation and evolution of patterns of the the Swift-Hohenberg equation (\ref{sh}), which arises in the Rayleigh-B\'{e}nard convection. Following \cite{PCC14, LY19}, we run the simulation from $t=0$ to $t=198$ on a rectangular domain $\Omega=[0,100]^2$, subject to random initial data and periodic boundary conditions. Model parameters $\varepsilon, \ g$  will be specified below for different cases.
%verify the energy stability of the LEQRK-DG-PC method and compare with the results from the second order IEQ-DG method in \cite{LY19}.

We apply the LEQRK-DG-PC scheme based on the fourth order RK method with Butcher tableau (\ref{RK4th}) and $P^2$ polynomials using mesh $128 \times 128$. We take time step $\tau =0.1$, which is much larger than that used in \cite{PCC14, LY19}. In the following two test cases, we output $E(q_h^n, U_h^n)-C_0|\Omega|$ instead of $E(q_h^n, U_h^n)$  to  better observe the evolution of the original free energy $\mathcal{E}(u)$.

\noindent\textbf{Test case 1.} (Rolls)
For parameters  $\varepsilon=0.3, \ g=0$, we observe the periodic rolls for different times as shown in Figure \ref{PatBifur2}. We see that the pattern evolves approaching the steady-state after $t>60$, as also evidenced by the energy evolution plot in Figure \ref{BifEng3}a.
%This result is quite close to that in \cite{LY19}.
% \begin{figure}
% \centering
% \subfigure{\includegraphics[width=0.49\textwidth]{figure/time2d.eps}}
% \subfigure{\includegraphics[width=0.49\textwidth]{figure/time2d2.eps}}
% \caption{ Energy evolution for serval time steps using (a) the new DG scheme (\ref{FPDGFull}), (b) the iterative DG scheme in .
% } \label{2du12eng}
% \end{figure}

 \begin{figure}
 \centering
 %\subfigure{\includegraphics[width=0.325\textwidth]{figure/roll001.eps}}
 \subfigure{\includegraphics[width=0.325\textwidth]{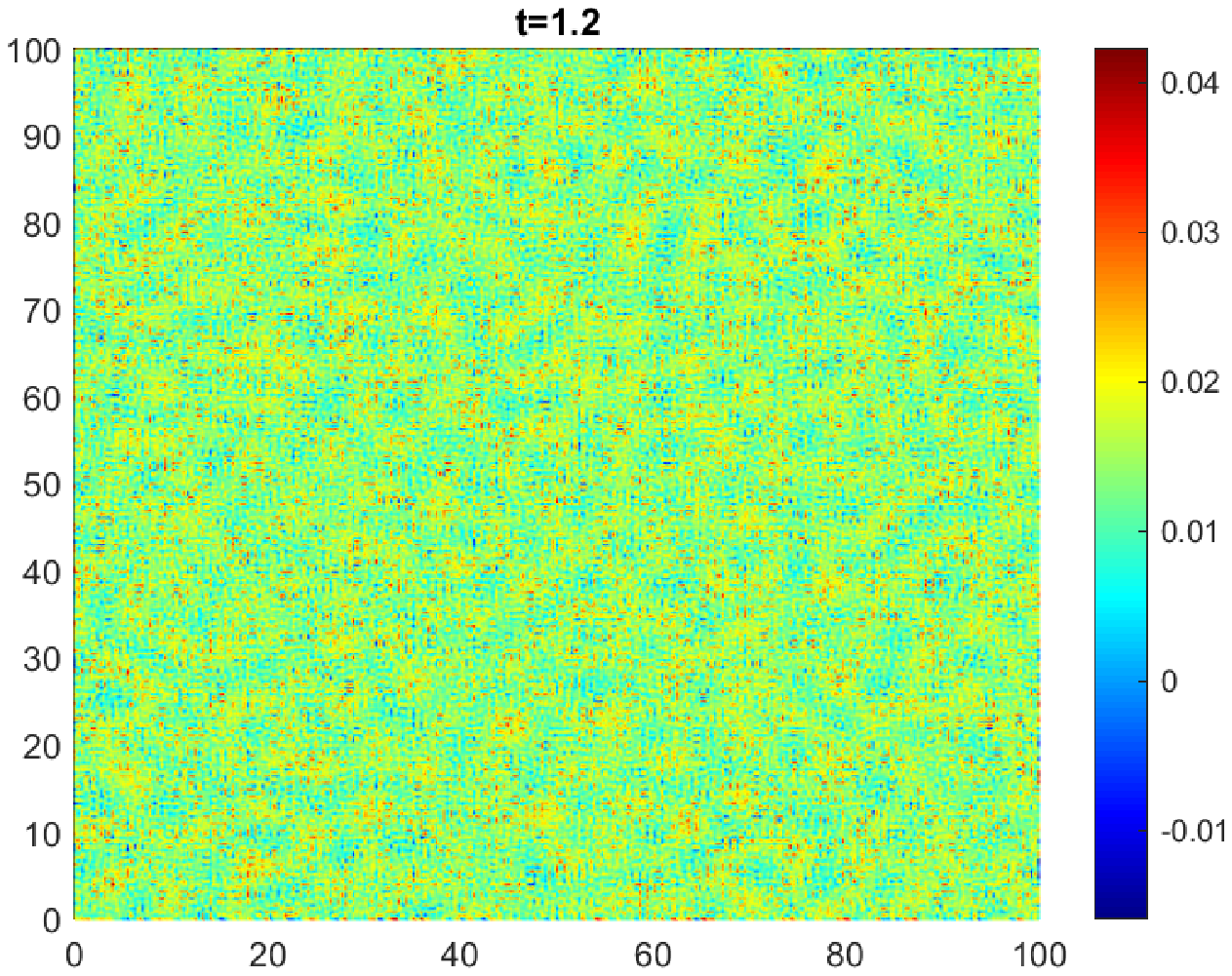}}
% \subfigure{\includegraphics[width=0.325\textwidth]{figure/roll6.eps}}
 \subfigure{\includegraphics[width=0.325\textwidth]{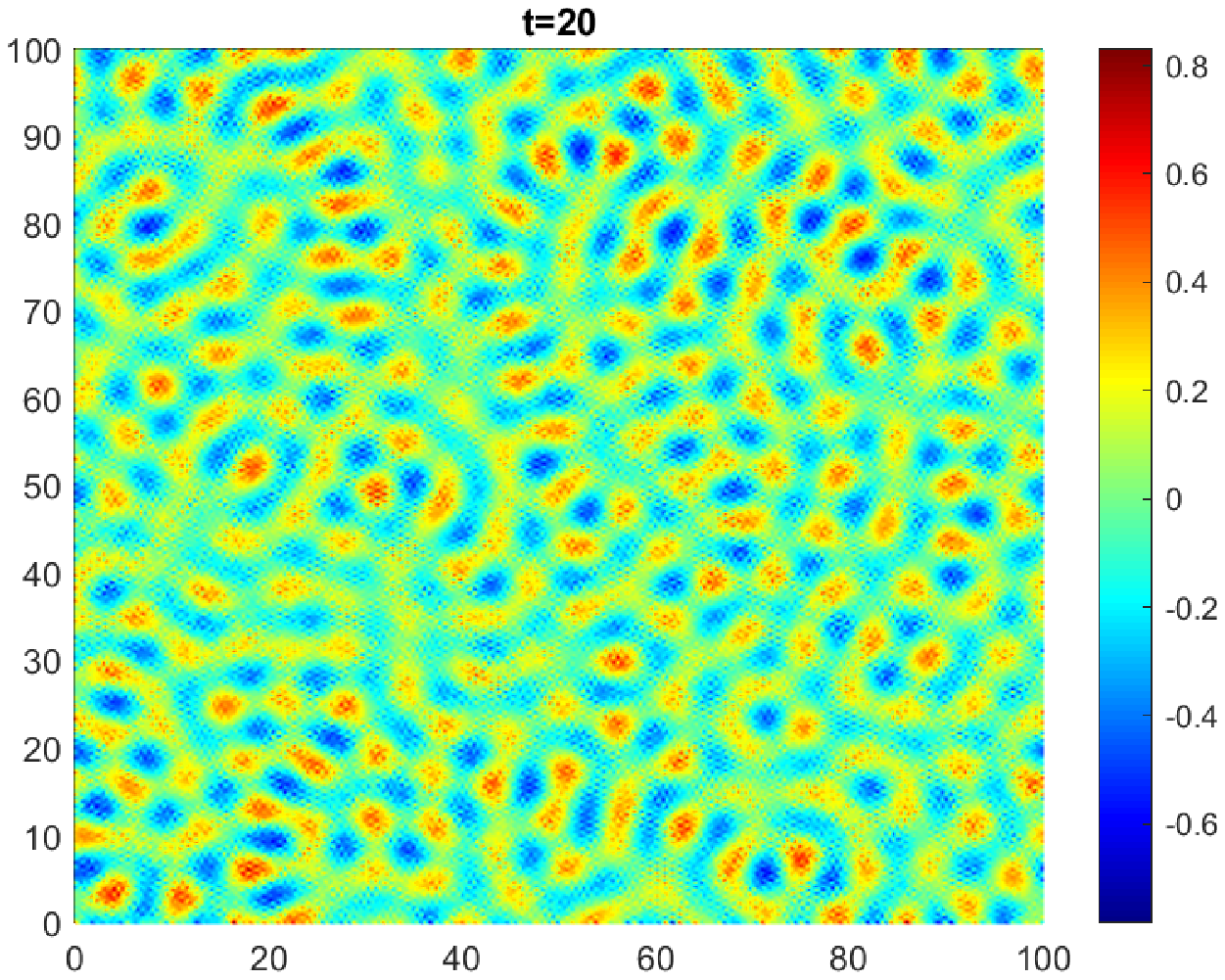}}
 \subfigure{\includegraphics[width=0.325\textwidth]{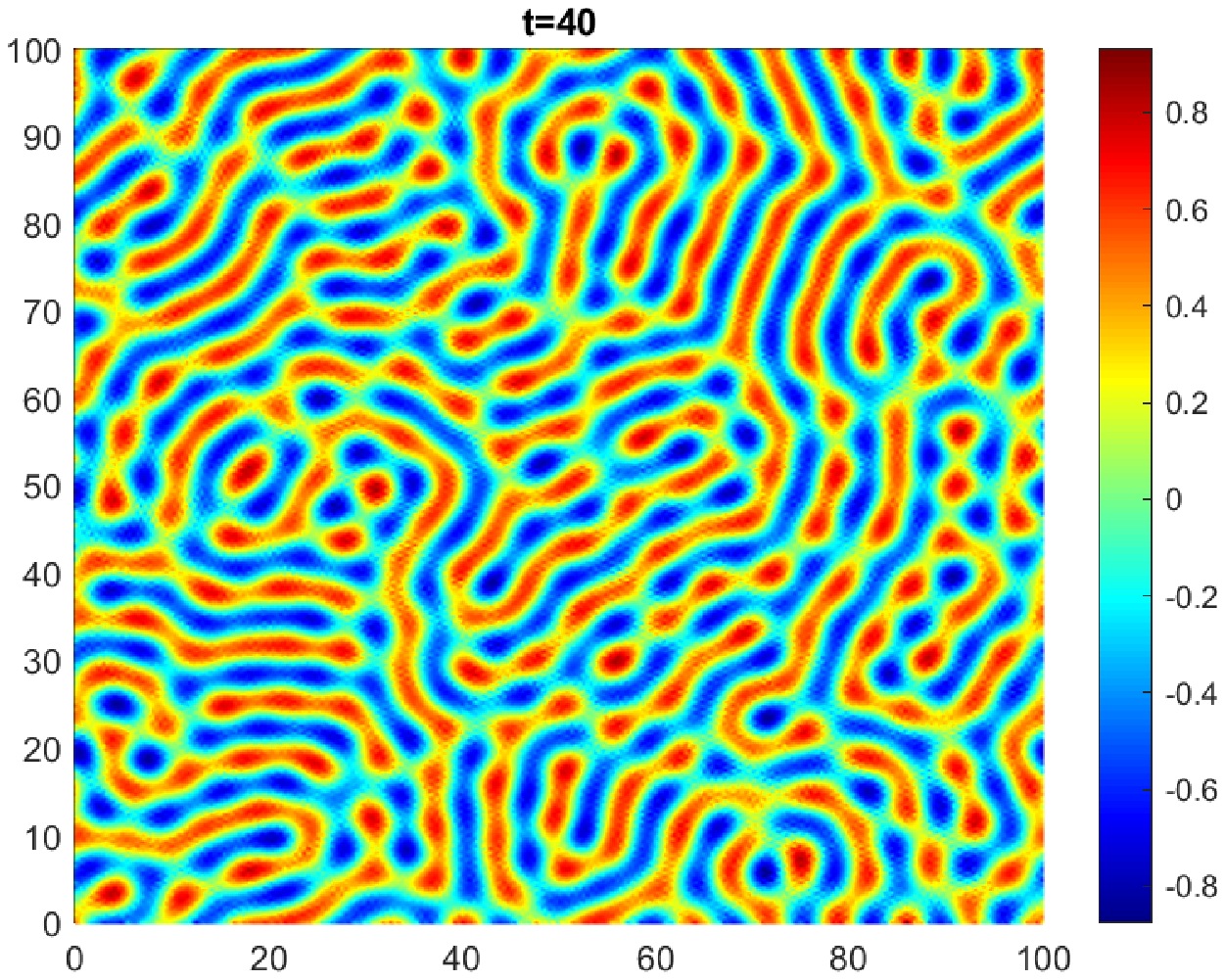}}
 \subfigure{\includegraphics[width=0.325\textwidth]{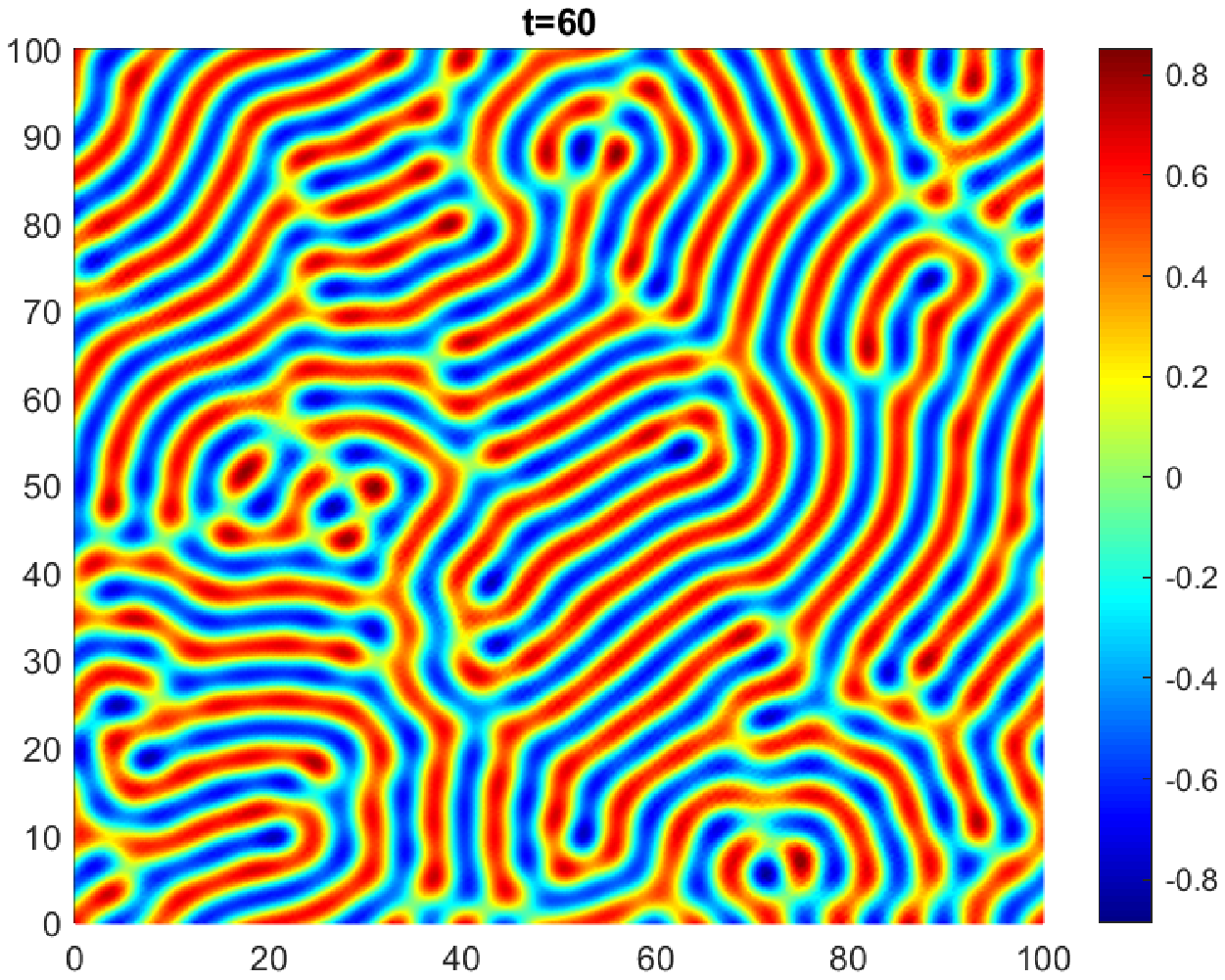}}
 \subfigure{\includegraphics[width=0.325\textwidth]{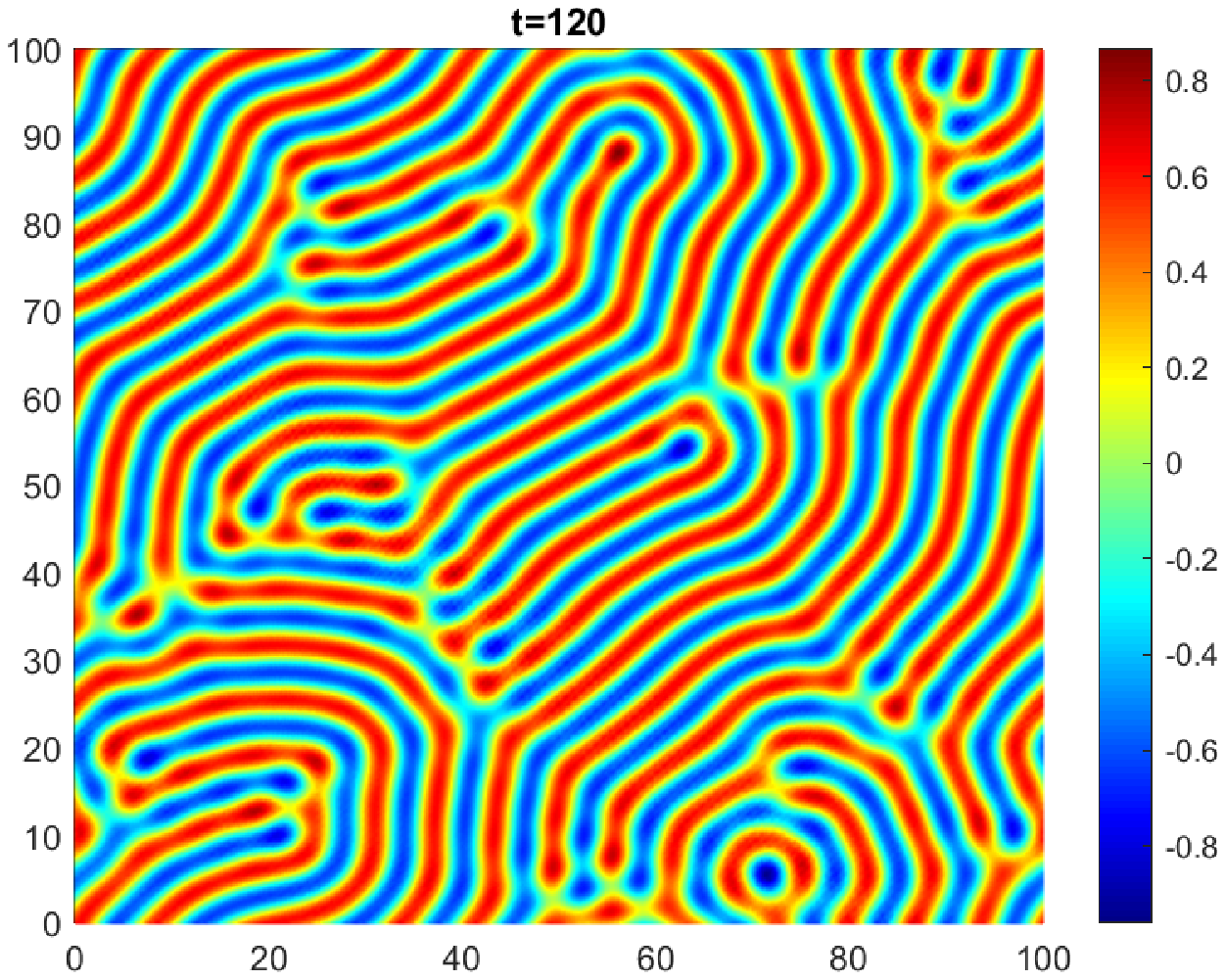}}
 \subfigure{\includegraphics[width=0.325\textwidth]{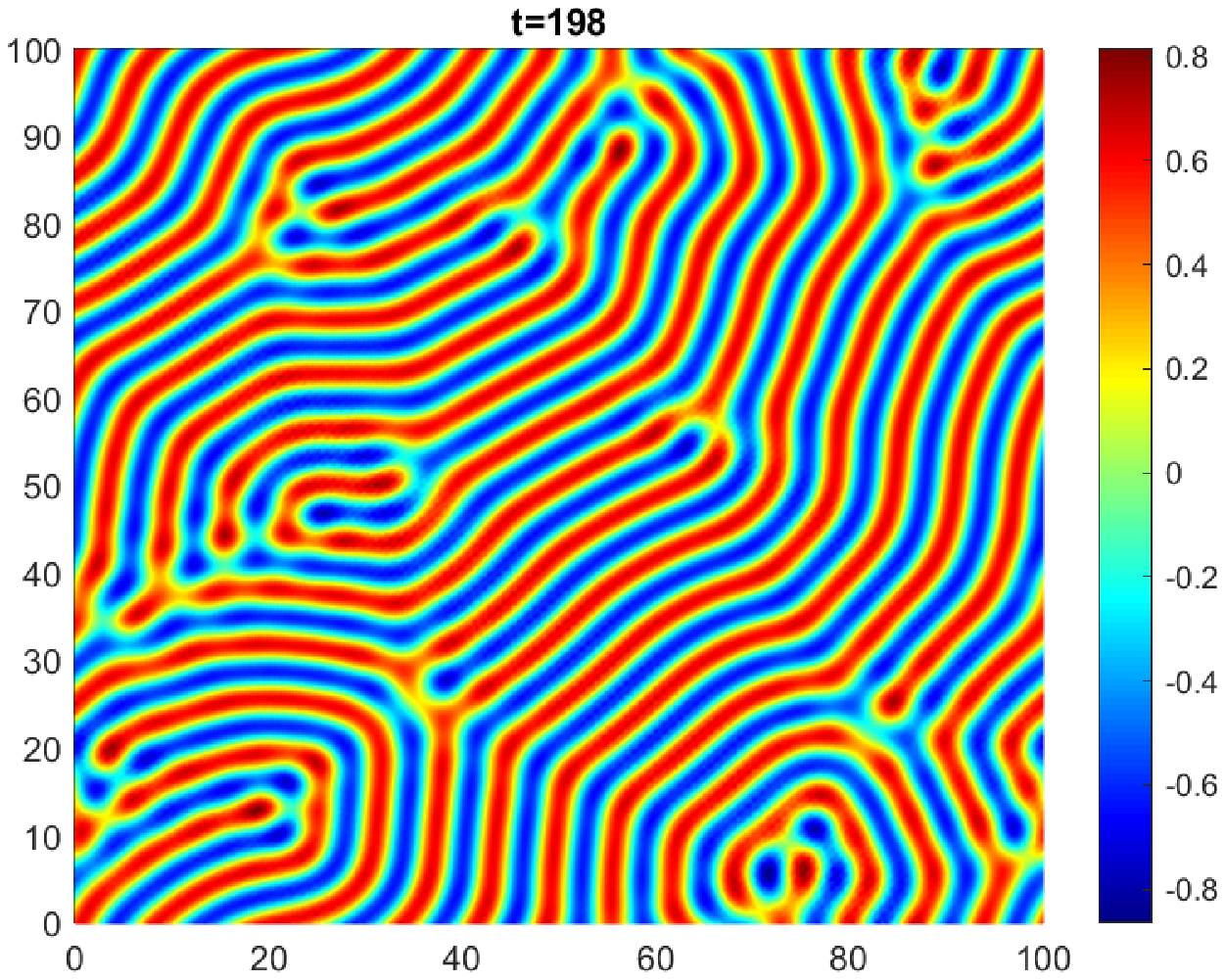}}
  \caption{ Evolution of periodic rolls.
  } \label{PatBifur2}
 \end{figure}

\noindent\textbf{Test case 2.} (Hexagons) The numerical simulations with the parameters $\varepsilon=0.1, \ g=1.0$ reveal vividly the formation and evolution of the hexagonal pattern as shown in Figure \ref{PatBifur3}. The pattern at $t=1.2$ is similar to that of rolls as shown in Figure \ref{PatBifur2}. Similar to the pattern obtained by the IEQ-DG scheme in \cite{LY19}, we also observe that at a certain point before $t=40$, lines break up giving way to single droplets that take hexagonal symmetry. The steady state is approaching after $t>100$.

The evolution of the patterns for both cases is shown to satisfy the energy dissipation law in Figure \ref{BifEng3}. With the same parameters $\varepsilon, \ g$ as in \cite{LY19}, the LEQRK-DG-PC scheme can generate  quite similar formation and evolution of both roll and hexagonal patterns even with a larger time step.

\begin{figure}
\centering
\subfigure{\includegraphics[width=0.325\textwidth]{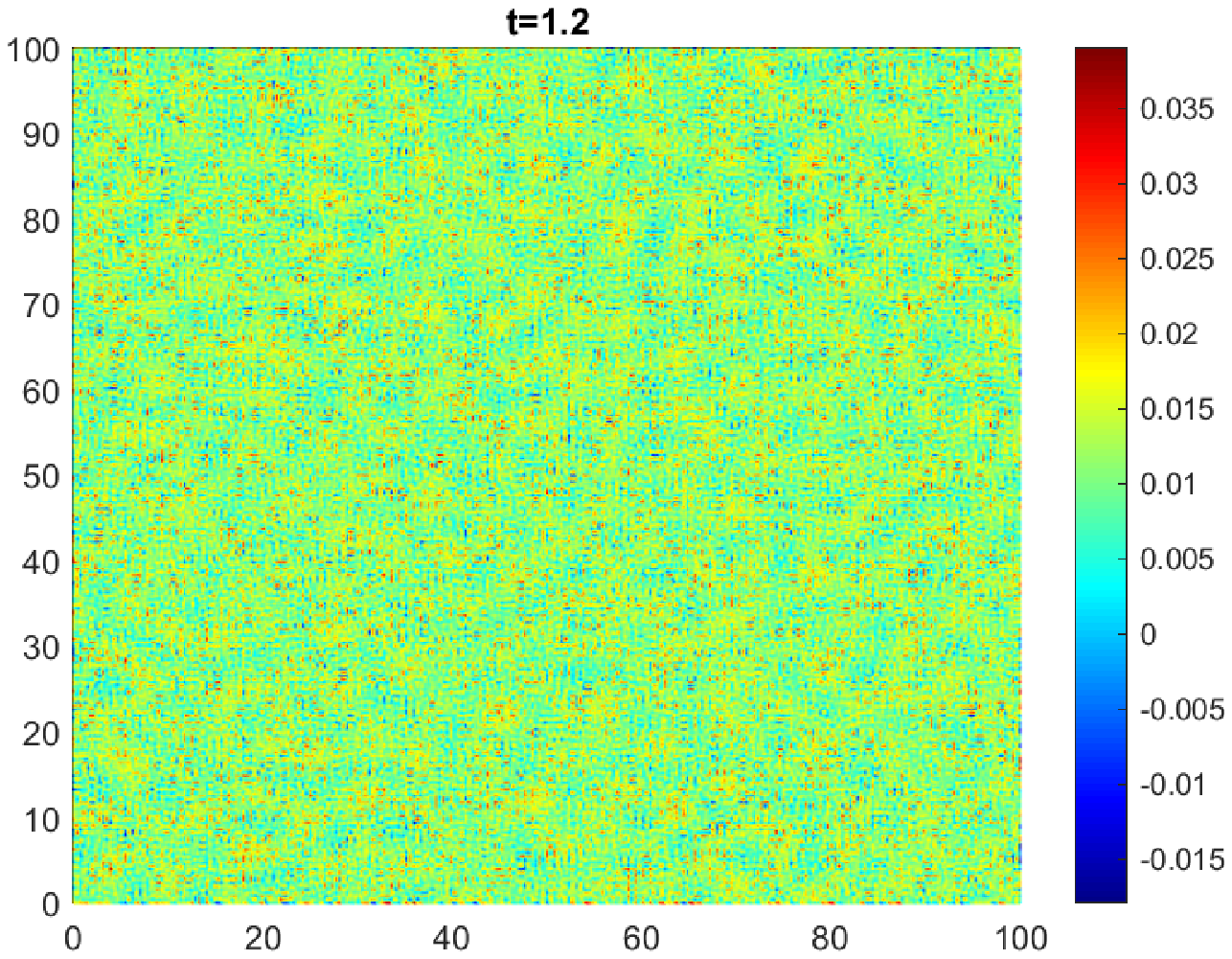}}
\subfigure{\includegraphics[width=0.325\textwidth]{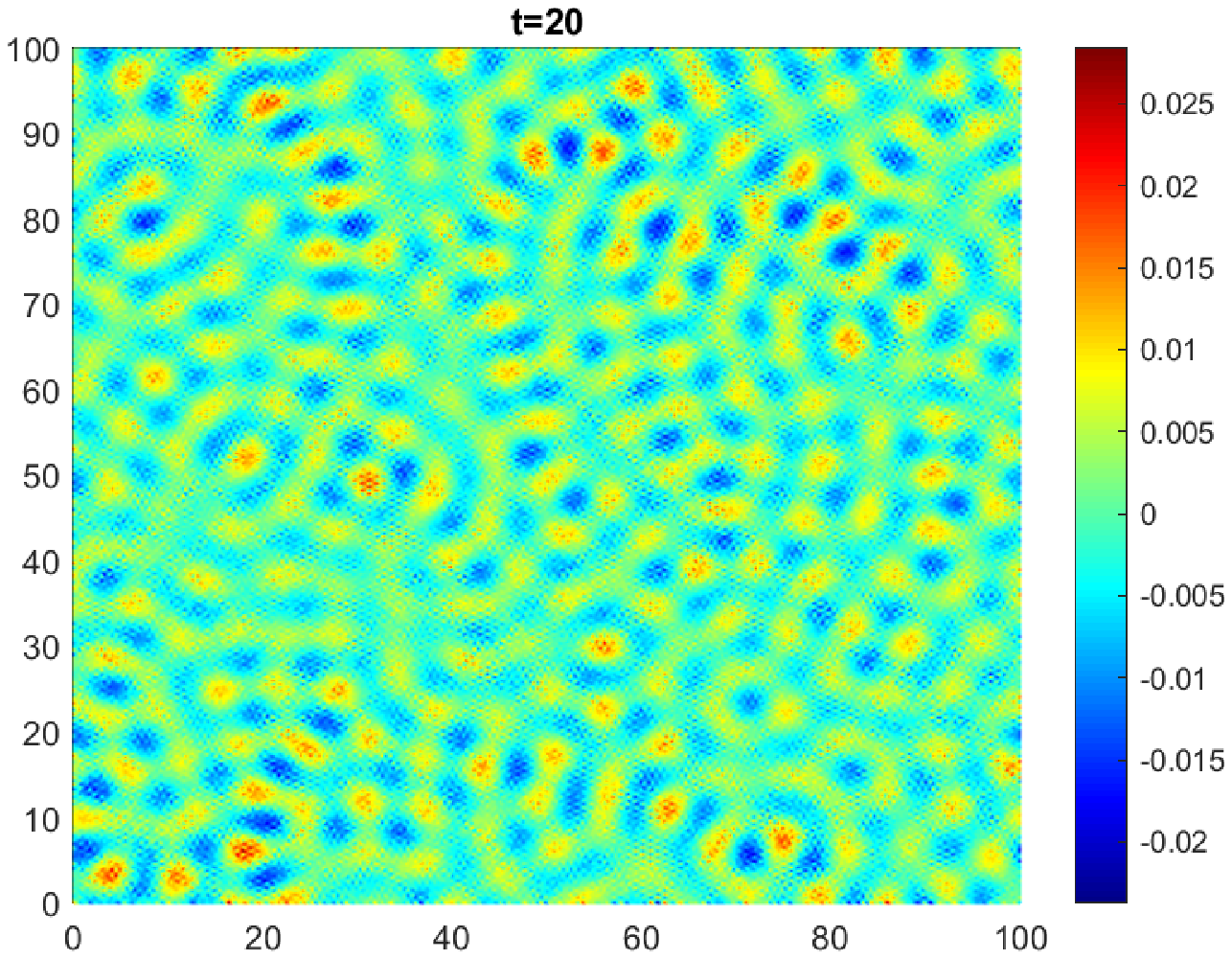}}
\subfigure{\includegraphics[width=0.325\textwidth]{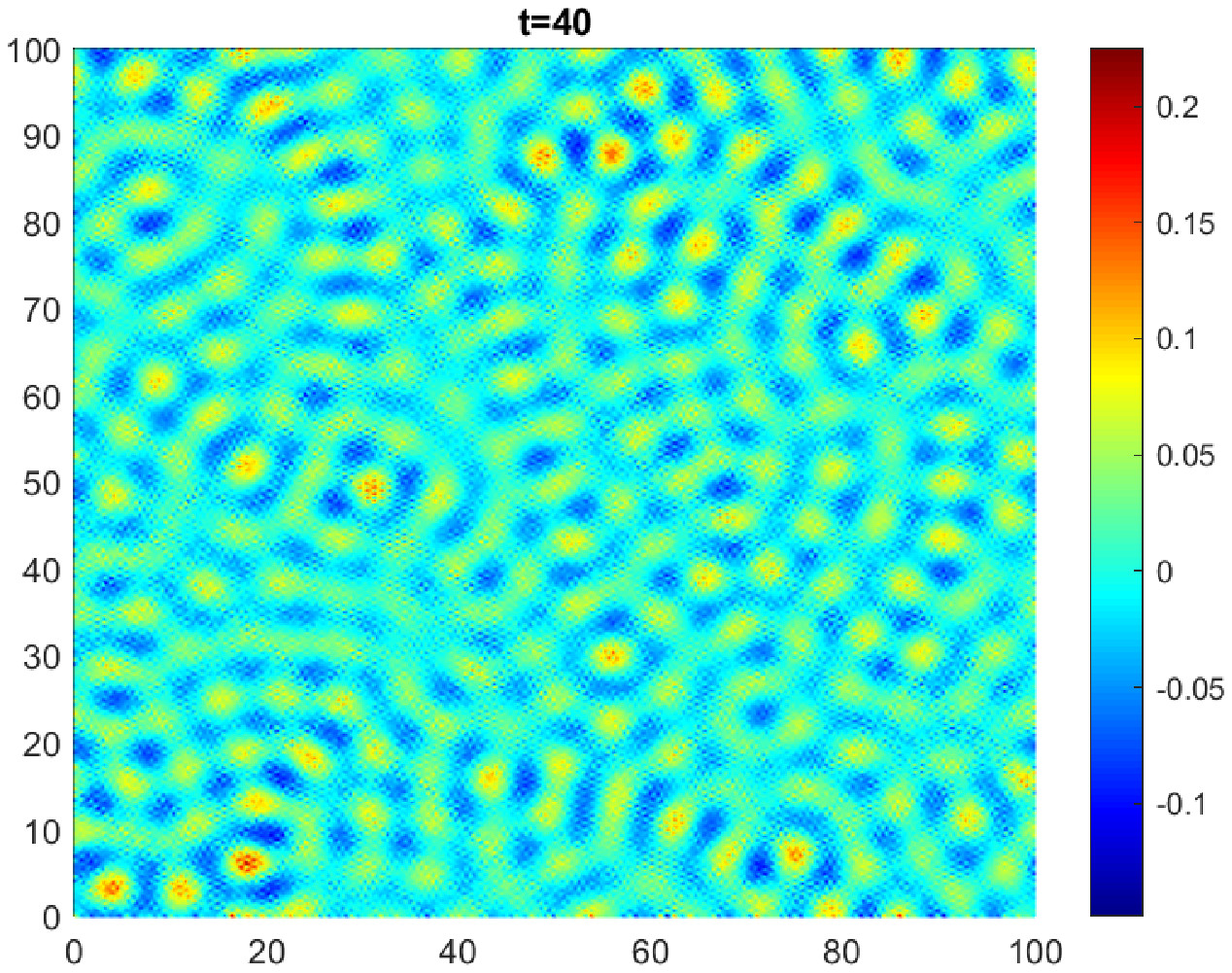}}
\subfigure{\includegraphics[width=0.325\textwidth]{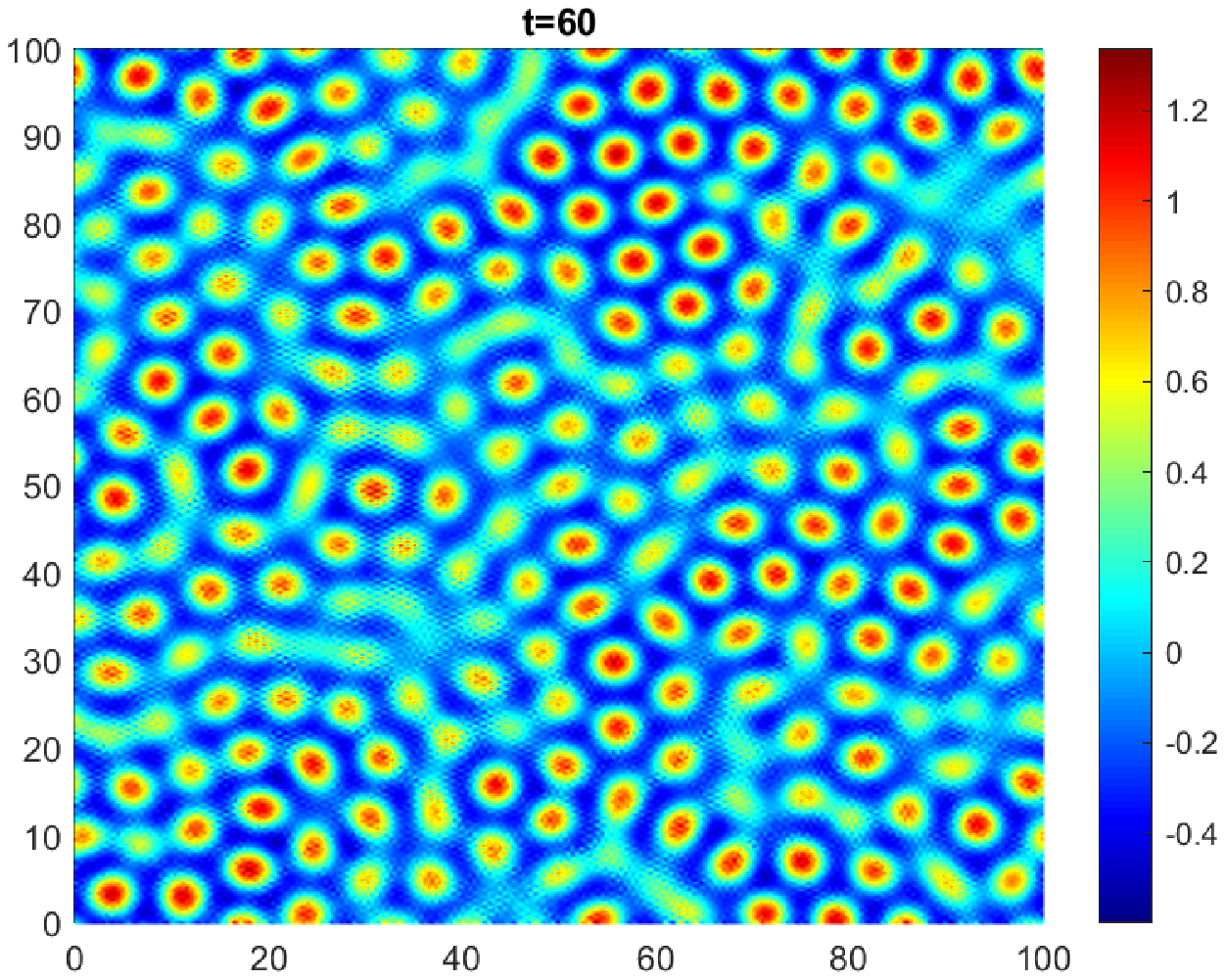}}
\subfigure{\includegraphics[width=0.325\textwidth]{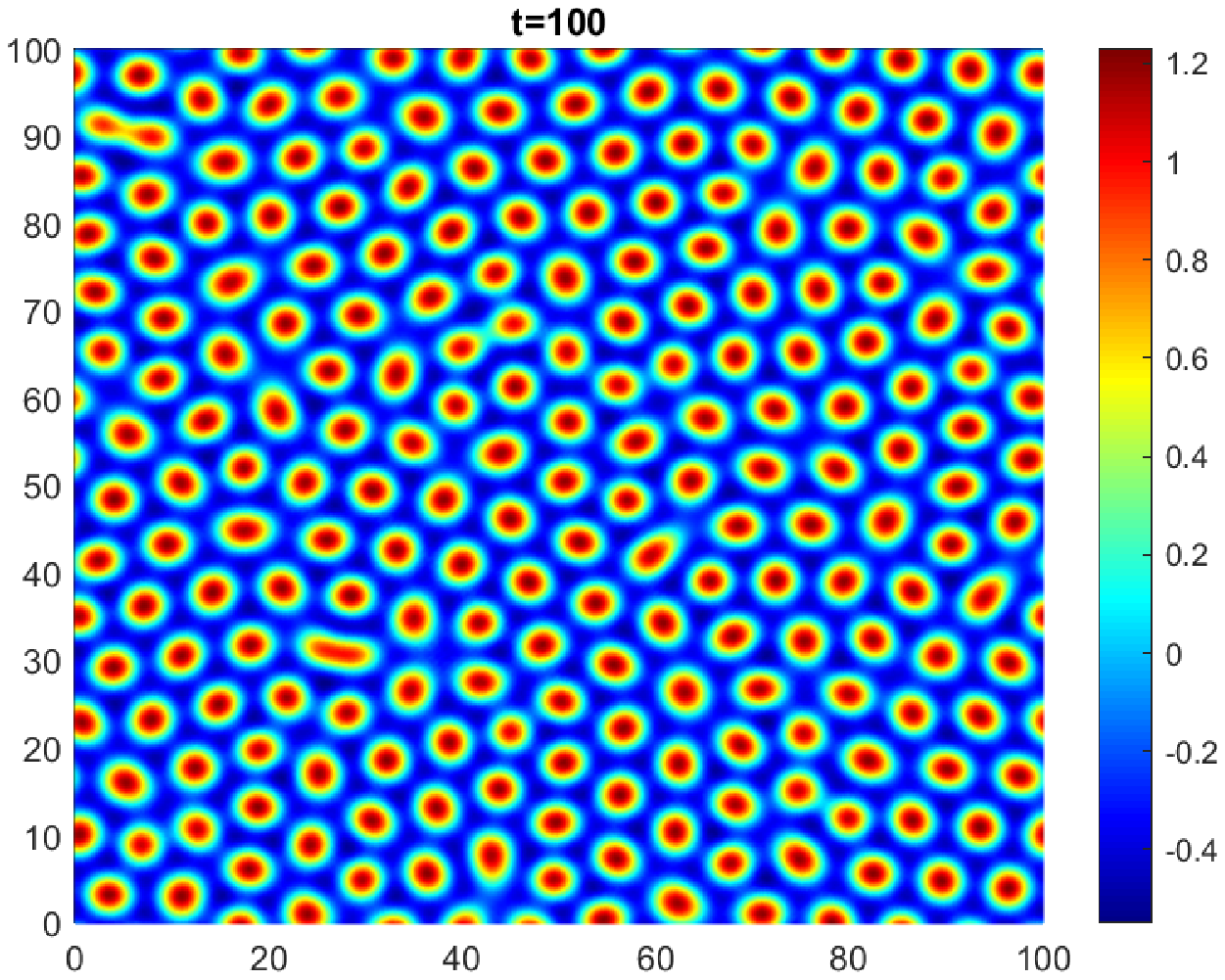}}
%\subfigure{\includegraphics[width=0.325\textwidth]{figure/hex108.eps}}
 %\subfigure{\includegraphics[width=0.325\textwidth]{figure/hex150.eps}}
\subfigure{\includegraphics[width=0.325\textwidth]{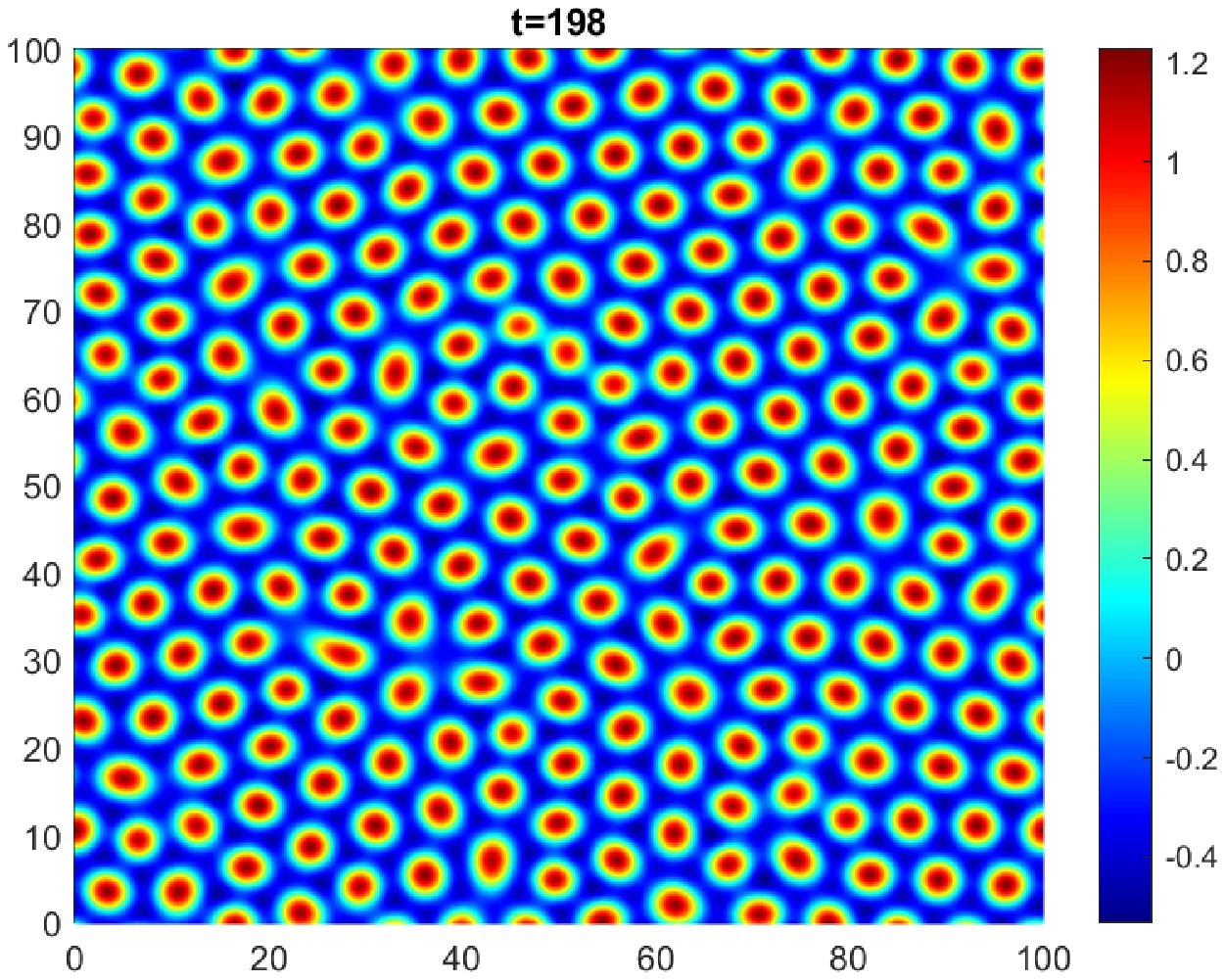}}
  \caption{ Evolution of hexagonal patterns. % at (a) t=0; (b) t=5; (c) t=10; (d) t=20; (e) t=40; (f) t=60.
  } \label{PatBifur3}
 \end{figure}

 \begin{figure}
 \centering
 \subfigure[]{\includegraphics[width=0.49\textwidth]{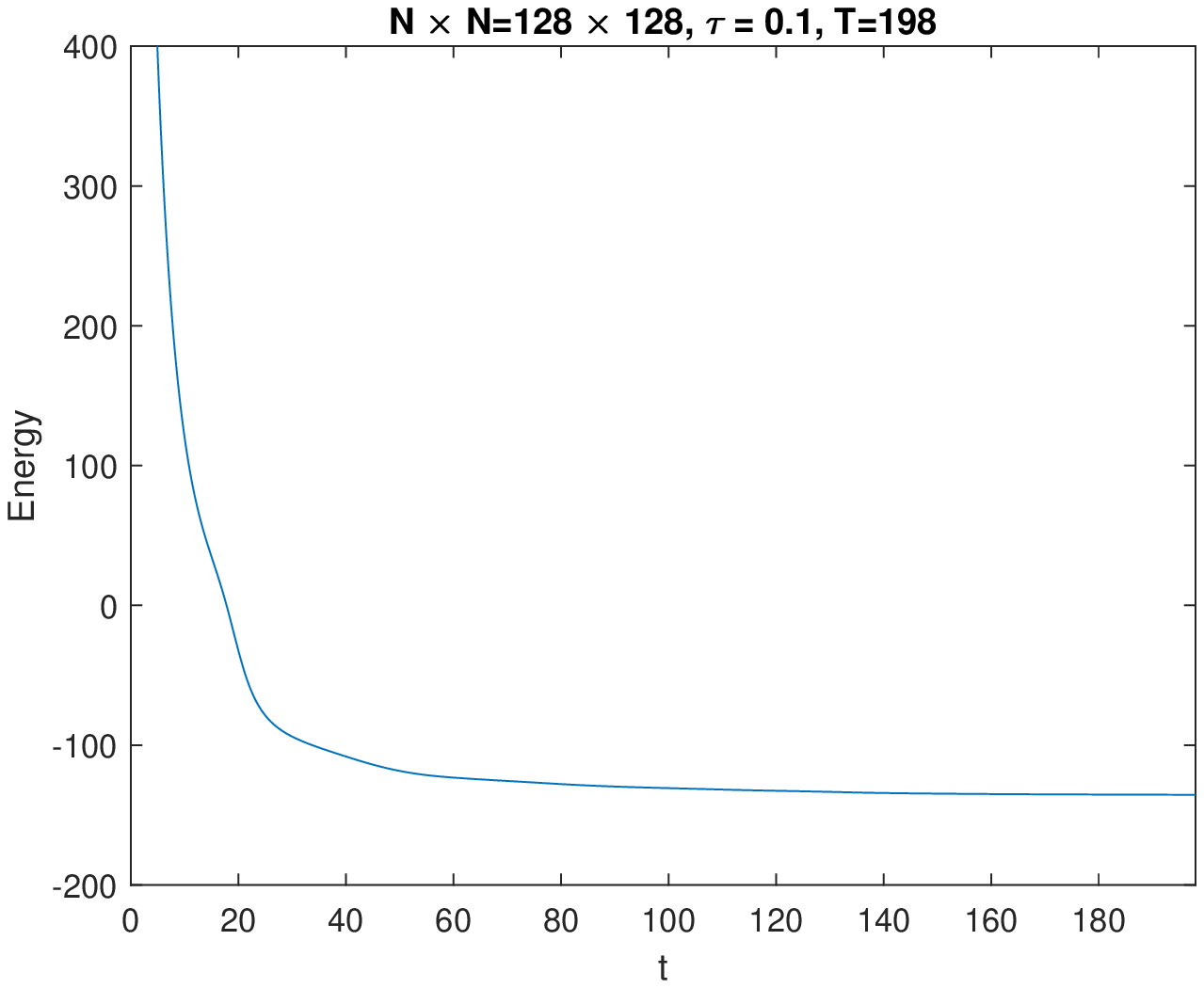}}
 \subfigure[]{\includegraphics[width=0.49\textwidth]{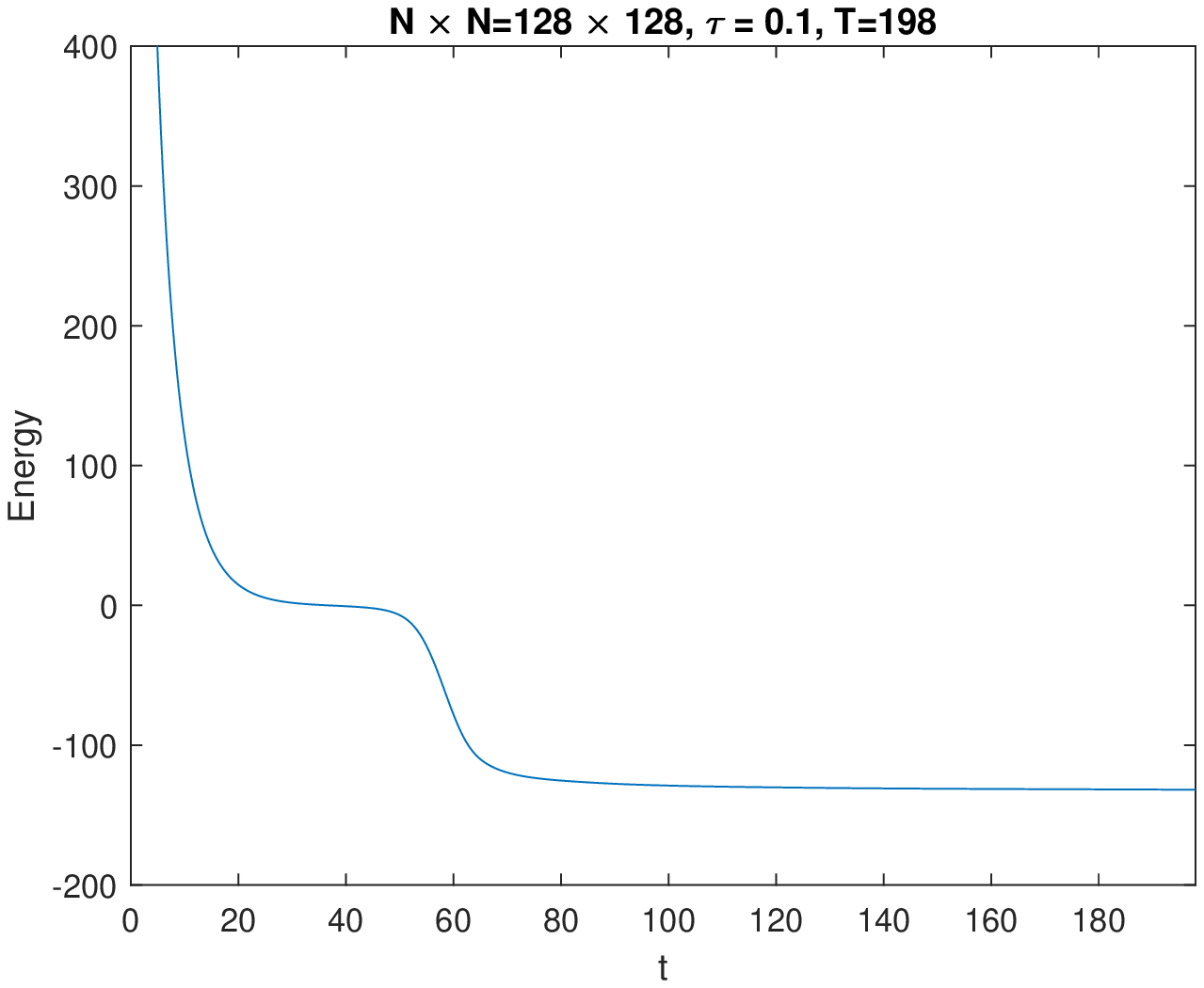}}
 \caption{ Energy evolution. (a) Rolls. (b) Hexagons.
 } \label{BifEng3}
 \end{figure}

\end{example}

\section{Concluding remarks}
%The fourth order gradient flow is a high order nonlinear partial differential equation features an energy dissipation property.
In this paper, we present a new class of arbitrarily high order, fully discrete DG schemes. These schemes have several advantageous properties: (1) the schemes are all linear such that they are easy to implement and computationally efficient; (2) the schemes are uniquely solvable and unconditionally energy stable, these ensure that large time steps can be used in some long time simulations; (3) the schemes can reach arbitrarily high order of accuracy in both space and time, so that desired accuracy of solutions can be guaranteed with flexible meshes and time steps;  (4) the schemes do not depend on the specific form of the DG operator explicitly such that it can be applied to a larger class of DG schemes as long as they satisfy a semi-discrete energy dissipation law.
\iffalse
fully discrete discontinuous Galerkin (DG) schemes that inherit the nonlinear stability relationship of the continuous equation irrespectively of the mesh and time step sizes. The spatial discretization is  based on the mixed DG method  introduced by us in \cite{LY18},  and the temporal discretization is based on the EQ approach together with an s-stage algebraically stable RK method for the nonlinear potential. Coupled with a proper projection, the resulting algorithm is explicit without resorting to any iteration method,  and proven to be unconditionally energy stable.
\fi
The proofs for energy stability are given. Several numerical examples are presented to assess the scheme performance in terms of accuracy and energy stability. The numerical results on two dimensional pattern formation problems indicate that the method is able to deliver expected patterns of high accuracy with a larger time step on coarse meshes.
%Pattern formation is the result of self-organization systems and there are many examples of this phenomenon, in spite of the different mechanisms that trigger and amplify the instability. The present method should be applicable to a wide variety of processes and can be variationally improved if necessary.

\section*{Acknowledgments}
 This research was supported by the National Science Foundation under Grant DMS1812666.

\bigskip


\begin{thebibliography}{10}

\bibitem{AC79}
S. M. Allen and J. W. Cahn.
\newblock A microscopic theory for antiphase boundary motion and its application to antiphase domain coarsening.
\newblock  {\em Acta. Metall.}, 27:1085--1095, 1979.

\bibitem{BPT01}
G. J. B. van den Berg, L. A.  Peletier and W. C. Troy.
\newblock Global branches of multi-bump periodic solutions of the Swift--Hohenberg equation.
\newblock {\em Arch. Rational Mech. Anal.,} 158:91--153, 2001.


\bibitem{BB79}
K. Burrage and J. C. Butcher.
\newblock Stability criteria for implicit Runge–Kutta methods.
\newblock {\em SIAM J. Numer. Anal.}, 16(1):46--57, 1979.


\bibitem{CH58}
J. W. Cahn and J. E. Hilliard.
\newblock Free energy of a nonuniform system. I. interfacial free energy.
\newblock  {\em J. Chem. Phys.,} 28:258--267, 1958.

\bibitem{CZY18}
L. Chen, J. Zhao, and X. Yang..
\newblock Regularized linear schemes for the molecular beam epitaxy model with slope selection.
\newblock {\em Applied Numerical Mathematics}, 128:138--156, 2018.

\bibitem{CWWW19}
W. Chen, C. Wang, X. Wang, S. M. Wise.
\newblock Positivity-preserving, energy stable numerical schemes for the Cahn-Hilliard equation with logarithmic potential.
\newblock {\em  J. Comput. Phys.: X}, 3:100031, 2019.

\bibitem{CS08}
Y. Cheng and  C.-W. Shu.
\newblock A discontinuous Galerkin finite element method for time dependent partial differential equations with higher order derivatives.
\newblock {\em Math. Comp.,} 77:699--730, 2008.

\bibitem{CP02}
C. I. Christov and  J. Pontes.
\newblock Numerical scheme for Swift--Hohenberg equation with strict implementation of Lyapunov functional.
\newblock {\em Math. Comput. Modelling},  35:87--99, 2002.

\bibitem{CPWV97}
C. I. Christov, J. Pontes, D. Walgraef and  M. G. Velarde.
\newblock Implicit time splitting for fourth-order parabolic equations.
\newblock {\em Comput. Methods Appl. Mech. Engrg.,} 148:209--224, 1997.

\bibitem{DS88}
G.  Dee and W. Saarloos.
\newblock Bistable systems with propagating fronts leading to pattern formation.
\newblock {\em Phys. Rev. Lett.,} 60:2641--2644, 1988.

\bibitem{DS09}
B. Dong and C.-W. Shu.
\newblock Analysis of a local discontinuous Galerkin method for linear time-dependent fourth-order problems.
\newblock {\em SIAM J. Numer. Anal.},  47(5):3240--3268. 2009.

\bibitem{Ey98}
D. J. Eyre.
\newblock  Unconditionally gradient stable time marching the Cahn-Hilliard equation.
\newblock  In Computational and mathematical models of microstructural evolution (San Francisco, CA, 1998), volume 529 of Mater. Res. Soc. Sympos. Proc., pages 39–46. MRS, 1998.

\bibitem{FK07}
X. Feng and O. A. Karakashian.
\newblock Fully discrete dynamic mesh discontinuous Galerkin methods for the Cahn-Hilliard equation of phase transition.
\newblock {\em Math. Comp.,}  76:1093--1117, 2007.

\bibitem{FLX16}
X. Feng, Y. Li and Y. Xing.
\newblock Analysis of mixed interior penalty discontinuous Galerkin methods for the Cahn-Hilliard equation and the Hele-Shaw flow.
\newblock {\em SIAM J. Numer. Anal.,}  54(2):825-–847, 2016.

\bibitem{FK99}
P. C. Fife and M. Kowalczyk.
\newblock A class of pattern-forming models.
\newblock {\em J. Nonlinear Sci.}, 9:641--669, 1999.

\bibitem{FKU17}
A. S.-Filibelio\v{g}lu, B. Karas\"{o}zen and M. Uzunca.
\newblock Energy stable interior penalty discontinuous Galerkin finite element method for Cahn-Hilliard equation.
\newblock {\em Int. J. Nonlinear Sci. Numer. Simul.,}  18(5):303-–314, 2017.


\bibitem{GO16}
K. Glasner and S. Orizaga.
\newblock Improving the accuracy of convexity splitting methods for gradient flow equations.
\newblock {\em  J. Comput. Phys.}, 315:52--64, 2016.

\bibitem{GN12}
H. Gomez and X. Nogueira.
\newblock A new space-time discretization for the Swift--Hohenberg equation that strictly respects the Lyapunov functional.
\newblock {\em Commun. Nonlinear Sci. Numer. Simulat.,} 17(12):4930--4946, 2012.

\bibitem{GZW20}
Y. Gong, J. Zhao and Q. Wang.
\newblock Arbitrarily high-order linear energy stable schemes for gradient flow models.
\newblock {\em  J. Comput. Phys.}, 419:109610, 2020.

\bibitem{GZW20SAV}
Y. Gong, J. Zhao and Q. Wang.
\newblock Arbitrarily high-order unconditionally energy stable SAV schemes for gradient flow models.
\newblock {\em  Computer Physics Communications}, 249:107033, 2020.

\bibitem{GZW20SIAM}
Y. Gong, J. Zhao and Q. Wang.
\newblock Arbitrarily high-order unconditionally energy stable schemes for thermodynamically consistent gradient flow models.
\newblock {\em SIAM Journal on Scientific Computing}, 42(1):B135--B156, 2020.

\bibitem{HW07}
J. S. Hesthaven and T. Warburton.
\newblock  Nodal {D}iscontinuous {G}alerkin {M}ethods: {A}lgorithms, {A}nalysis, and {A}pplications.
\newblock  Springer, New York, 2007.

\bibitem{HAX19}
D. Hou, M. Azaiez, and C. Xu.
\newblock A variant of scalar auxiliary variable approaches for gradient flows.
\newblock {\em J. Comput. Phys.}, 395:307--332, 2019.

\bibitem{I96}
A. Iserles.
\newblock A First Course in the Numerical Analysis of Differential Equations.
\newblock {\em  Cambridge University Press}, 1996.

\bibitem{KWLK00}
R. M. Kirby, T. C. Warburton, I. Lomtev and G.E. Karniadakis.
\newblock A discontinuous Galerkin spectral/{\em hp} method on hybrid grids.
\newblock {\em Applied Numerical Mathematics}, 33:393--405, 2000.

\bibitem{Le17}
H. G. Lee.
\newblock A semi-analytical Fourier spectral method for the Swift--Hohenberg equation.
\newblock {\em Computers \& Mathematics with Applications (CMA),} 74(8):1885--1896, 2017.

\bibitem{LY09}
H. ~Liu and J. ~Yan.
\newblock The {D}irect {D}iscontinuous {G}alerkin ({D}{D}{G}) method for diffusion problems.
\newblock  {\em SIAM Journal on Numerical Analysis.}  47(1):675--698, 2009.

\bibitem{LY10}
H. ~Liu and J. ~Yan.
\newblock The {D}irect {D}iscontinuous {G}alerkin ({D}{D}{G}) method for diffusion with interface corrections.
\newblock {\em Commun. Comput. Phys.},  8(3):541--564, 2010.

\bibitem{LY18}
H. Liu and P. Yin.
\newblock A mixed discontinuous Galerkin method without interior penalty for time-dependent fourth order problems.
\newblock {\em J. Sci. Comput.,} 77:467--501, 2018.

\bibitem{LY19}
H. Liu and P. Yin.
\newblock Unconditionally energy stable DG schemes for the Swift-Hohenberg equation.
\newblock {\em J. Sci. Comput.,} 81:789--819, 2019.

\bibitem{LY20}
H. Liu and P. Yin.
\newblock  Unconditionally energy stable DG schemes for the Cahn-Hilliard equation.
\newblock {\em arXiv preprint,} arXiv:1912.10197, 2019.

\bibitem{LY20SAV}
H. Liu and P. Yin.
\newblock  On the SAV-DG method for a class of fourth order gradient flows.
\newblock {\em arXiv preprint,} arXiv:2008.11877, 2020.

\bibitem{LTS20}
Y. Liu, Q. Tao and C.-W. Shu.
\newblock  Analysis of optimal superconvergence of an ultra-weak local discontinuous Galerkin method for a time dependent fourth-order equation.
\newblock {\em ESAIM: Mathematical Modelling and Numerical Analysis}, 54:1797--1820, 2020.

\bibitem{LL19}
Z. Liu and X. Li.
\newblock Efficient modified techniques of invariant energy quadratization approach for gradient
flows.
\newblock {\em Applied Math. Letters}, 98:206--214, 2019.

\bibitem{MD14}
D. Morgan, J. H. P. Dawes.
\newblock The Swift-Hohenberg equation with a nonlocal nonlinearity.
\newblock  {\em Physica D: Nonlinear Phenomena,}  270(1):60--80, 2014.

\bibitem{N74}
S.P. N{\o}rsett.
\newblock Semi-explicit Runge-kutta Methods.
\newblock  {\em Department of Mathematics, University of Trondheim},  1974.

\bibitem{PR04}
L. A. Peletier and V. Rottsch\"{a}fer.
\newblock Pattern selection of solutions of the Swift--Hohenberg equation.
\newblock  {\em Physica D: Nonlinear Phenomena,}  194(1):95--126, 2004.

\bibitem{PCC14}
S. S. P\'{e}rez-Moreno, S. R. Chavarr\'{i}a and G. R. Chavarr\'{i}a.
\newblock Numerical solution of the Swift--Hohenberg equation.
\newblock  {\em In: J. Klapp, A. Medina (eds).  Experimental and Computational Fluid Mechanics. Environmental Science and Engineering. Springer, Cham.,} 409--416, 2014.

\bibitem{QZ92}
M. Z. Qin and M. Q. Zhang.
\newblock Symplectic Runge-Kutta algorithms for Hamiltonian systems.
\newblock  {\em J. Comput. Math. (Suppl.)}, 205-215, 1992.

\bibitem{PT95}
L. A. Peletier and W. C. Troy.
\newblock Spatial patterns described by the extended Fisher-Kolmogorov (EFK) equation: kinks.
\newblock {\em Differ.  Integral  Equ.},  8:1279--1304, 1995.

\bibitem{Ri08}
B. Rivi{\'e}re.
\newblock {\em Discontinuous {G}alerkin Methods for Solving Elliptic and Parabolic Equations}.
\newblock Society for Industrial and Applied Mathematics, 2008.

\bibitem{SEVDPC17}
A. F. Sarmiento, L. F. R. Espath, P. Vignal, L. Dalcin, M. Parsani and V. M. Calo.
\newblock An energy-stable generalized-$\alpha$ method for the Swift--Hohenberg equation.
\newblock {\em J. Comput. Appl. Math.,} 344:836--851, 2018.

\bibitem{SY10}
J. Shen and X. Yang.
\newblock Numerical approximations of Allen-Cahn and Cahn-Hilliard equations.
\newblock  {\em Discrete Contin. Dyn. Syst. A,} 28:1669--1691, 2010.

\bibitem{Shu09}
C.-W. Shu.
\newblock Discontinuous {G}alerkin methods: general approach and stability.
\newblock In {\sl Numerical solutions of partial differential equations}, Adv.
  Courses Math. CRM Barcelona, pages 149--201. Birkh\"auser, Basel, 2009.



\bibitem{SH77}
J. Swift and P. C. Hohenberg.
\newblock Hydrodynamic fluctuations at the convective instability.
\newblock  {\em Physical Review A,}  15:319--328, 1977.

\bibitem{SY18}
J. Shen, J. Xu  and X. Yang.
\newblock  The scalar auxiliary variable (SAV) approach for gradient flows.
\newblock  {\em J. Comput. Phys.}, 353:407--416, 2018.

\bibitem{WZS17}
H. Wang, Q. Zhang and C.-W. Shu.
\newblock Stability analysis and error estimates of local discontinuous Galerkin methods with implicit-explicit time-marching for the time-dependent fourth order PDEs.
\newblock  {\em ESAIM: M2AN,}  51:1931--1955, 2017.

\bibitem{WKG06}
G. N. Wells. E. Kuhl and K. Garikipati.
\newblock A discontinuous Galerkin method for the Cahn-Hilliard equation.
\newblock  {\em J. Comput. Phys.,} 218:860--877, 2006.

\bibitem{WZZ14}
X. Wu, G. J. v. Zwieten and K. G. v. d. Zee.
\newblock Stabilized second-order convex splitting schemes for Cahn-Hilliard models with application to diffuse-interface tumor-growth models.
\newblock  {\em Int. J. Numer. Meth. Biomed. Engng.,} 30:180--203, 2014.

\bibitem{XT06}
C. Xu and T. Tang.
\newblock Stability analysis of large time-stepping methods for epitaxial growth models.
\newblock  {\em SIAM. J. Num. Anal.}, 44:1759--1779, 2006.

\bibitem{XYZX19}
Z. Xu, X. Yang, H. Zhang, and Z. Xie.
\newblock Efficient and linear schemes for anisotropic Cahn-Hilliard model using the stabilized-invariant energy quadratization (S-IEQ) approach.
\newblock  {\em omputer Physics Communications}, 238:36–49, 2019.

\bibitem{YS02}
J. Yan, C.-W. Shu.
\newblock  Local discontinuous Galerkin methods for partial differential equations with higher order derivatives.
\newblock {\em J. Sci. Comput.,} 17(1):24--47, 2002.

\bibitem{Y16}
X. Yang.
\newblock  Linear, first and second order and unconditionally energy stable numerical schemes for the phase field model of homopolymer blends.
\newblock {\em J. Comput. Phys.,}  302:509--523, 2016.

%\bibitem{YHL14}
%P. Yin, Y. Huang and H. Liu.
%\newblock  An iterative discontinuous Galerkin method for solving the nonlinear Poisson-Boltzmann equation.
%\newblock  {\em Commun. Comput. Phys.,} 16:491--515, 2014.
%
%\bibitem{YHL18}
%P. Yin, Y. Huang and H. Liu.
%\newblock Error estimates for the iterative discontinuous Galerkin method to the nonlinear Poisson-Boltzmann equation.
%\newblock  {\em Commun. Comput. Phys.,} 23:168--197, 2018.

%%\bibitem{YZW17}
%% X. Yang, J. Zhao and Q. Wang.
%%\newblock  Numerical approximations for the molecular beam epitaxial growth model based on the invariant energy quadratization method.
%%\newblock {\em J. Comput. Phys.,}  333:104--127, 2017.
%
\bibitem{ZYGZLY18}
J. Zhao, X. Yang, Y. Gong, X. Zhao, J. Li, X. Yang and Q. Wang.
\newblock Linear, second order and unconditionally energy stable schemes for the viscous Cahn-Hilliard equation with hyperbolic relaxation.
\newblock {\em  J. Comput. Appl. Math.,} 343:80--97, 2018.

\end{thebibliography}
\end{document}